\newtheorem{corollary}{Corollary}
\newtheorem{lemma}{Lemma}
\newtheorem{proposition}{Proposition}
\newtheorem{theorem}{Theorem}
\theoremstyle{definition}
\newtheorem{definition}{Definition}
\newtheorem{example}{Example}
\theoremstyle{remark}
\newtheorem{remark}{Remark}
\DeclareMathOperator{\im}{im}
\DeclareMathOperator{\op}{\cdot_{op}}
\begin{document}

\title[Hilbert's basis theorem]{Hilbert's basis theorem for non-associative and hom-associative Ore extensions}
\author{Per B\"ack}
\address[Per B\"ack]{Division of Mathematics and Physics, The School of Education, Culture and Communication, M\"alar\-dalen  University,  Box  883,  SE-721  23  V\"aster\r{a}s, Sweden}
\email[corresponding author]{per.back@mdu.se}

\author{Johan Richter}
\address[Johan Richter]{Department of Mathematics and Natural Sciences, Blekinge Institute of Technology, SE-371 79 Karlskrona, Sweden}
\email{johan.richter@bth.se}

\subjclass[2020]{17D30}
\keywords{Hilbert's basis theorem, hom-associative algebras, hom-associative Ore extensions, hom-modules, non-commutative Noetherian rings, octonionic Weyl algebra}

\begin{abstract}
We prove a hom-associative version of Hilbert's basis theorem, which includes as special cases both a non-associative version and the classical Hilbert's basis theorem for associative Ore extensions. Along the way, we develop hom-module theory. We conclude with some examples of both non-associative and hom-associative Ore extensions which are all Noetherian by our theorem.
\end{abstract}

\maketitle

\section{Introduction}
Hom-associative algebras are not necessarily associative algebras, the associativity condition being replaced by $\alpha(a)\cdot (b\cdot c)=(a\cdot b)\cdot \alpha(c)$, where $\alpha$ is a linear map referred to as a twisting map, and $a,b,c$ arbitrary elements in the algebra. Both associative algebras and non-associative algebras can thus be seen as hom-associative algebras; in the first case, by taking $\alpha$ equal to the identity map, and in the latter case by taking $\alpha$ equal to the zero map.

Historically hom-associative algebras originate in the development of hom-Lie algebras, the latter introduced by Hartwig, Larsson and Silvestrov as generalizations of Lie algebras, the Jacobi identity now twisted by a vector space homomorphism; the ``hom'' referring to this homomorphism~\cite{HLS03}. The introduction of these generalizations of Lie algebras was mainly motivated by an attempt to study so-called $q$-deformations of the Witt and Virasoro algebras within a common framework. Makhlouf and Silvestrov then introduced hom-associative algebras as the natural counterparts to associative algebras; taking a hom-associative algebra and defining the commutator as a new multiplication gives a hom-Lie algebra, just as with the classical relation between associative algebras and Lie algebras~\cite{MS06}. It was later discovered that there existed formally rigid associative algebras that could now be formally deformed when considered as hom-associative algebras~\cite{MS10a}, this indicating that hom-associative algebras could be useful in studying deformations as well. Since then, many papers have been written in the subject, and other algebraic structures have been discovered to have natural counterparts in the ``hom-world'' as well, such as e.g. hom-coalgebras, hom-bialgebras, and hom-Hopf algebras~\cite{MS10b,MS09}.

A class of algebras that contains many examples of formally rigid associative algebras such as e.g. the Weyl algebras and some universal enveloping algebras of Lie algebras, is that of Ore extensions. Ore extensions were first introduced under the name \emph{non-commutative polynomial rings} by Ore~\cite{Ore33}. Non-associative Ore extensions were introduced by Nystedt, {\"O}inert, and Richter in the unital case~\cite{NOR15} (see also \cite{NOR17} for a further extension to monoid Ore extensions). The construction was later generalized to non-unital, hom-associative Ore extensions by the authors of the present article and Silvestrov~\cite{BRS18}. They give examples, including hom-associative versions of the first Weyl algebra, the quantum plane, and a universal enveloping algebra of a Lie algebra, all of which are formal deformations of their associative counterparts~\cite{Bac19,BR19}.

In this paper, we prove a hom-associative version of Hilbert's basis theorem (\autoref{thm:hom-hilbert}), including as special cases both a non-associative version (\autoref{cor:non-hilbert}) and the classical associative Hilbert's basis theorem for Ore extensions (\autoref{re:assoc-hilbert}). In order to prove this, we develop hom-module theory and a notion of being hom-Noetherian (\autoref{sec:hom-module-theory}). Whereas the hom-module theory does not require a multiplicative idenity element, the hom-associative Ore extensions in this article are all assumed to be unital. We conclude with some examples of unital, non-associative and hom-associative Ore extensions which are  Noetherian as a consequence of our main theorem. In more detail, the article is organized as follows:

\autoref{sec:preliminaries} provides preliminaries from the theory of hom-associative algebras, and of unital, hom-associative Ore extensions as developed in~\cite{BRS18}.

\autoref{sec:hom-module-theory} deals with hom-modules over non-unital, hom-associative rings. Here, isomorphism theorems and the property of being hom-Noetherian are introduced. Whereas the notion of a hom-module was first introduced in \cite{MS10a}, the theory developed here is new as far as we can tell.

\autoref{sec:hilbert-theorem} contains the proof of a hom-associative version of Hilbert's basis theorem, including as special cases a non-associative and the classical associative version.

\autoref{sec:examples} contains examples of unital, non-associative and hom-associative Ore extensions which are all Noetherian by the aforementioned theorem.

\section{Preliminaries}\label{sec:preliminaries}
Throughout this paper, by \emph{non-associative} algebras we mean algebras which are not necessarily associative, including in particular associative algebras. We call a non-associative algebra $A$ \emph{unital} if there exists an element $1\in A$ such that for any element $a\in A$, $a\cdot 1=1\cdot a=a$. By \emph{non-unital} algebras, we mean algebras which are not necessarily unital, including unital algebras by definition.

\subsection{Hom-associative algebras}
This section is devoted to restating some basic definitions and general facts concerning hom-associative algebras.
\begin{definition}[Hom-associative algebra]\label{def:hom-assoc-algebra} A \emph{hom-associative algebra} over an associative, commutative, and unital ring $R$, is a triple $(M,\cdot,\alpha)$ consisting of an $R$-module $M$, a binary operation $\cdot\colon M\times M\to M$ linear over $R$ in both arguments, and an $R$-linear map $\alpha\colon M\to M$ satisfying, for all $a,b,c\in M$, $\alpha(a)\cdot(b\cdot c)=(a\cdot b)\cdot\alpha(c)$. The map $\alpha$ is referred to as the \emph{twisting map}.
\end{definition}

\begin{remark}
A hom-associative algebra over $R$ is in particular a non-unital, non-associative $R$-algebra, and in case $\alpha$ is the identity map, a non-unital, associative $R$-algebra. Moreover, any non-unital, non-associative $R$-algebra is a hom-associative $R$-algebra with twisting map equal to the zero map.
\end{remark}

\begin{remark}\label{re:unitality-condition}
If $A$ is a unital, hom-associative algebra, then $\alpha$ is completely determined by $\alpha(1)$ since $\alpha(a) =\alpha(a)(1\cdot 1)=a\cdot\alpha(1)$ for any $a\in A$.  
\end{remark}

\begin{definition}[Morphism of hom-associative algebras]\label{def:morphism} A \emph{morphism} from a hom-associative $R$-algebra $A:=(M,\cdot,\alpha)$ to a hom-associative $R$-algebra $A':=(M',\cdot',\alpha')$ is an $R$-linear map $f\colon M\to M'$ such that $f\circ \alpha= \alpha'\circ f$ and $f(a\cdot b)=f(a)\cdot'f(b)$ for all $a,b\in M$. If $f$ is bijective, the two are \emph{isomorphic}, written $A\cong A'$.
\end{definition}

\begin{definition}[Hom-associative subalgebra] Let $A:=(M,\cdot,\alpha)$ be a hom-associative algebra and $N$ a submodule of $M$ that is closed under the multiplication $\cdot$ and invariant under $\alpha$. The hom-associative algebra $(N,\cdot,\alpha\vert_N)$ is said to be a \emph{hom-subalgebra} of $A$.
\end{definition}

\begin{definition}[Hom-ideal] A \emph{right (left) hom-ideal} of a hom-associative $R$-algebra $A$ is an $R$-submodule $I$ of $A$ such that $\alpha(I)\subseteq I$, and for all $a\in A, i\in I$, $i\cdot a\in I$ ($a\cdot i\in I$). If $I$ is both a left and a right hom-ideal, it is simply called a \emph{hom-ideal}.
\end{definition}

Note that a hom-ideal of a hom-associative algebra $A$ is in particular a hom-subalgebra of $A$.

\begin{remark}\label{re:ideal}
In case a hom-associative algebra has twisting map equal to the identity map or the zero map, a right (left) hom-ideal is simply a right (left) ideal.
\end{remark}

\begin{definition}[Hom-associative ring]\label{def:hom-ring} A \emph{hom-associative ring} is a hom-associative algebra over the ring of integers.
\end{definition}

\begin{definition}[Opposite hom-associative ring] Let $S:=(R,\cdot,\alpha)$ be a hom-as\-so\-cia\-tive ring. The \emph{opposite hom-associative ring} of $S$, written $S^\text{op}$, is the hom-associative ring $(R,\cdot_\text{op},\alpha)$ where $r\cdot_\text{op} s:=s\cdot r$ for any $r,s\in R$.
\end{definition}

\subsection{Unital, non-associative Ore extensions}
In this section, we recall from \cite{BRS18} some basic definitions and results concerning unital, non-associative Ore extensions. We denote by $\mathbb{N}$ the set of all non-negative integers, and by $\mathbb{N}_{>0}$ the set of all positive integers. Let $R$ be a unital, non-associative ring, $\delta\colon R\to R$ and $\sigma\colon R\to R$  additive maps such that $\sigma(1)=1$ and $\delta(1)=0$. As a set, a $\emph{unital, non-associative Ore extension}$ of $R$, written $R[X;\sigma,\delta]$, consists of all formal sums $\sum_{i\in\mathbb{N}} a_i X^i$, called \emph{polynomials},  where only finitely many $a_i\in R$ are nonzero. We endow $R[X;\sigma,\delta]$ with the following addition and multiplication, holding for any $m,n\in\mathbb{N}$ and $a_i,b_i\in R$:
\begin{equation*}
\sum_{i\in\mathbb{N}}a_iX^i + \sum_{i\in\mathbb{N}}b_i X^i = \sum_{i\in\mathbb{N}}(a_i+b_i)X^i,\quad aX^m\cdot bX^n=\sum_{i\in \mathbb{N}}\left(a\cdot\pi^m_i(b)\right)X^{i+n}. 
\end{equation*}
Here, $\pi_i^m$, referred to as a \emph{$\pi$ function}, denotes the sum of all $\binom{m}{i}$ possible compositions of $i$ copies of $\sigma$ and $m-i$ copies of $\delta$ in arbitrary order. For instance, $\pi_1^2=\sigma\circ\delta+\delta\circ\sigma$. We also define $\pi_0^0:=\mathrm{id}_R$ and $\pi^m_i\equiv0$ whenever $i<0$, or $i>m$. The identity element $1$ in $R$ also becomes an identity element in $R[X;\sigma,\delta]$ upon identification with $1X^0$. We also think of $X$ as an element of $R[X;\sigma,\delta]$ by identifying it with the monomial $1X$. At last, defining two polynomials to be equal if and only if their corresponding coefficients are equal and imposing distributivity of the multiplication over addition make $R[X;\sigma,\delta]$ a unital, non-associative, and non-commutative ring. By identifying any $a\in R$ with $aX^0\in R[X;\sigma,\delta]$, we see that $R$ is a subring of $R[X;\sigma,\delta]$.

\begin{definition}[$\sigma$-derivation]\label{def:sigma-derivation} Let $R$ be a unital, non-associative ring where $\sigma$ is a unital endomorphism and $\delta$ an additive map on $R$. Then $\delta$ is called a \emph{$\sigma$-derivation} if $\delta(a\cdot b)=\sigma(a)\cdot\delta(b)+\delta(a)\cdot b$ holds for all $a,b\in R$. If $\sigma=\mathrm{id}_R$, then $\delta$ is simply a \emph{derivation}.
\end{definition}

\begin{remark}\label{re:sigma-derivation}
If $\delta$ is a $\sigma$-derivation on a unital, non-associative ring $R$, then $\delta(1)=0$.
\end{remark}

\begin{lemma}[Properties of $\pi$ functions]\label{lem:lem-of-pi}Let $R$ be a unital, non-associative ring, $\sigma$ a unital endomorphism and $\delta$ a $\sigma$-derivation on $R$. Then, in $R[X;\sigma,\delta]$, the following hold for all $a,b\in R$ and $l,m,n\in\mathbb{N}$:
\begin{enumerate}[label=(\roman*)]
\item\label{eq:nystedt-sum}$\sum_{i\in\mathbb{N}}\pi_i^m\left(a\cdot\pi_{l-i}^n(b)\right)=\sum_{i\in\mathbb{N}}\pi_i^m(a)\cdot\pi_l^{i+n}(b).$
\item\label{eq:pi-function-sum-split}$\pi_l^{m+1}=\pi_{l-1}^m\circ\sigma + \pi_l^m\circ\delta =\sigma\circ\pi_{l-1}^m + \delta\circ\pi_l^m$.
\end{enumerate}
\end{lemma}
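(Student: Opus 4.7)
The plan is to establish (ii) first by a direct combinatorial splitting, and then derive (i) by induction on $m$ using both forms of (ii).

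For (ii), recall that $\pi_l^{m+1}$ is, by definition, the sum over all sequences of $m+1$ operators consisting of $l$ copies of $\sigma$ and $m+1-l$ copies of $\delta$, of the corresponding composition. Splitting the sum by which operator is innermost (applied first) gives $\pi_{l-1}^m\circ\sigma + \pi_l^m\circ\delta$, since removing an innermost $\sigma$ leaves $l-1$ copies of $\sigma$ and $m+1-l$ copies of $\delta$ among the outer $m$ positions, while removing an innermost $\delta$ leaves $l$ copies of $\sigma$ and $m-l$ copies of $\delta$. Splitting instead by the outermost operator yields $\sigma\circ\pi_{l-1}^m + \delta\circ\pi_l^m$ by the same argument.

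For (i), I would induct on $m$ with $l$, $n$, $a$, and $b$ arbitrary. The base case $m=0$ collapses to $a\cdot\pi_l^n(b) = a\cdot\pi_l^n(b)$, since $\pi_0^0=\mathrm{id}_R$ and $\pi_i^0\equiv 0$ otherwise. For the inductive step, apply the innermost form of (ii) to write $\pi_i^{m+1}(a\cdot\pi_{l-i}^n(b))$ as $\pi_{i-1}^m(\sigma(a\cdot\pi_{l-i}^n(b))) + \pi_i^m(\delta(a\cdot\pi_{l-i}^n(b)))$. Then expand $\sigma$ and $\delta$ across the product using that $\sigma$ is a unital endomorphism and $\delta$ is a $\sigma$-derivation; after one reindexing of the first piece, the expression splits into three sums whose summands involve $\sigma(a)\cdot\sigma(\pi_{l-i-1}^n(b))$, $\sigma(a)\cdot\delta(\pi_{l-i}^n(b))$, and $\delta(a)\cdot\pi_{l-i}^n(b)$, respectively. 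The first two can be consolidated inside the argument by the outermost form of (ii), $\sigma\circ\pi_{l-i-1}^n + \delta\circ\pi_{l-i}^n = \pi_{l-i}^{n+1}$, yielding a single sum of the form $\sum_i\pi_i^m(\sigma(a)\cdot\pi_{l-i}^{n+1}(b))$.

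The induction hypothesis now applies to both remaining sums, with $\sigma(a)$ and parameter $n+1$ for the combined one and $\delta(a)$ with parameter $n$ for the other. After one more reindexing, the innermost form of (ii) applied in reverse reassembles $\pi_{i-1}^m(\sigma(a)) + \pi_i^m(\delta(a))$ into $\pi_i^{m+1}(a)$, which matches the right-hand side of (i). The only real obstacle is index bookkeeping: the identity is essentially a double-entry expansion of the product rule, and every move is forced by which form of (ii) one needs and whether the innermost or outermost operator is being peeled off.
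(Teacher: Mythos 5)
Your proof of (ii) is exactly the paper's argument: split the $\binom{m+1}{l}$ compositions by whether $\sigma$ or $\delta$ sits innermost (respectively outermost) and invoke the Pascal recursion $\binom{m+1}{l}=\binom{m}{l-1}+\binom{m}{l}$ together with the convention $\pi^m_l\equiv 0$ for $l<0$ or $l>m$. For (i), however, you take a genuinely different route. The paper does not prove (i) at all; it cites Nystedt's combinatorial proof of the associative case and observes that that argument never uses associativity. You instead give a self-contained induction on $m$: peel off the innermost operator via (ii), distribute $\sigma$ as an endomorphism and $\delta$ via the $\sigma$-derivation rule, reassemble $\sigma\circ\pi_{l-i-1}^n+\delta\circ\pi_{l-i}^n$ into $\pi_{l-i}^{n+1}$ by the outermost form of (ii), apply the induction hypothesis to the two resulting sums (with $\sigma(a)$, $n+1$ and with $\delta(a)$, $n$), and reassemble on the other side with the innermost form of (ii). I checked the bookkeeping and it closes correctly: the base case $m=0$ reduces both sides to $a\cdot\pi_l^n(b)$, the additivity of $\pi_i^m$ justifies splitting the derivation term, and the final reindexing $i\mapsto i-1$ produces $\pi_{i-1}^m(\sigma(a))+\pi_i^m(\delta(a))=\pi_i^{m+1}(a)$ as needed. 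Your argument only uses that $\sigma$ is multiplicative and $\delta$ a $\sigma$-derivation, so like Nystedt's it is associativity-free; what it buys over the paper's treatment is that the lemma becomes self-contained rather than resting on an external reference, at the cost of a longer computation.
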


\begin{proof}A proof of \ref{eq:nystedt-sum} in the associative setting can be found in \cite{Nys13}. However, the proof makes no use of associativity, so we can conclude that \ref{eq:nystedt-sum} holds in the non-associative setting as well.

Regarding \ref{eq:pi-function-sum-split}, we first recall that $\pi_{l}^{m+1}$ consists of the sum of all $\binom{m+1}{l}$ possible compositions of $l$ copies of $\sigma$ and $m+1-l$ copies of $\delta$. Therefore, we can split the sum into a part containing $\sigma$ innermost (outermost) and a part containing $\delta$ innermost (outermost). When $l=0$, we immediately see that the result holds as $\pi_{-1}^m:=0$. When $l>m$, $\pi_l^m:=0$, and in case also $l>m+1$, $\pi_l^{m+1}=\pi_{l-1}^m:=0$. In case $l=m+1$, $\pi_l^l=\pi_{l-1}^{l-1}\circ\sigma=\sigma\circ\pi_{l-1}^{l-1}$, so we can conclude that \ref{eq:pi-function-sum-split} holds when $l=0$ and when $l>m$. For the remaining case $1\leq l\leq m$, we use the recursive formula for binomial coefficients $\binom{m+1}{l}=\binom{m}{l-1}+\binom{m}{l}$ and simply count the terms in the two parts of the sum.
\end{proof}

For any unital, hom-associative ring $R$ with twisting map $\alpha$, we extend $\alpha$ \emph{homogeneously} to an additive map on $R[X;\sigma,\delta]$ by putting $\alpha\left(\sum_{i\in\mathbb{N}}a_iX^i\right):=\sum_{i\in\mathbb{N}}\alpha(a_i)X^i$ for any $a_i\in R$. The next proposition makes use of this construction.

\begin{proposition}[Sufficient conditions for hom-associativity of {$R[X;\sigma,\delta]$} \cite{BRS18}]\label{prop:extended-ore} Let $R$ be a unital, hom-associative ring with twisting map $\alpha$, $\sigma$ a unital endomorphism and $\delta$ a $\sigma$-derivation that both commute with $\alpha$. Extend $\alpha$ homogeneously to $R[X;\sigma,\delta]$. Then $R[X;\sigma,\delta]$ is a unital, hom-associative Ore extension with twisting map $\alpha$.
\end{proposition}

\section{Hom-module theory}\label{sec:hom-module-theory}
In this section, we develop the theory of hom-modules over non-unital, hom-associative rings. Most of the proofs are nearly identical to the classical proofs of the associative case. We have, however, provided them for completeness.

\subsection{Basic definitions and theorems}
\begin{definition}[Hom-module]\label{def:hom-module}
Let $R$ be a non-unital, hom-associative ring with twisting map $\alpha_R$, multiplication written with juxtaposition. Let $M$ be an additive group with a group homomorphism $\alpha_M\colon M\to M$, also called a twisting map. A \emph{right $R$-hom-module} $M_R$ consists of $M$ and an operation $\cdot\colon M\times R\to M$, called \emph{scalar multiplication}, such that for all $r_1,r_2\in R$ and $m_1,m_2\in M$, the following hold:
\begin{align}
(m_1+m_2)\cdot r_1&=m_1\cdot r_1+m_2\cdot r_1 &\text{(right-distributivity)},\tag{M1}\label{eq:hom-mod-1}\\
m_1\cdot(r_1+r_2)&= m_1\cdot r_1+ m_1\cdot r_2 &\text{(left-distributivity)},\tag{M2}\label{eq:hom-mod-2}\\
\alpha_M(m_1)\cdot(r_1r_2)&=(m_1\cdot r_1)\cdot\alpha_R(r_2) &\text{(hom-associativity)}.\tag{M3}\label{eq:hom-mod-3}
\end{align}
A \emph{left $R$-hom-module} is defined analogously and written $_RM$.
\end{definition}

For the sake of brevity, we also allow ourselves to write $M$ in case it does not matter whether it is a right or a left $R$-hom-module, and simply call it an $R$-hom-module. Furthermore, any two right (left) $R$-hom-modules are assumed to be equipped with the same twisting map $\alpha_R$ on $R$.

\begin{remark}\label{re:ring-as-module}A hom-associative ring $R$ is both a right $R$-hom-module $R_R$ and a left $R$-hom-module $_RR$.
\end{remark}

\begin{definition}[Morphism of hom-modules] A \emph{morphism} from a right (left) $R$-hom-module $M$ to a right (left) $R$-hom-module $M'$ is an additive map $f\colon M\to M'$ such that $f\circ\alpha_M=\alpha_{M'}\circ f$ and $f(m\cdot r)=f(m)\cdot r$ ($f(r\cdot m)=r\cdot f(m)$) hold for all $m\in M$ and $r\in R$. If $f$ is also bijective, the two are \emph{isomorphic}, written $M\cong M'$.
\end{definition}

\begin{definition}[Hom-submodule] Let $M$ be a right (left) $R$-hom-module. An \emph{$R$-hom-submodule}, or just \emph{hom-submodule}, is an additive subgroup $N$ of $M$ that is closed under scalar multiplication and invariant under $\alpha_M$. $N$ is then a right (left) $R$-hom-module with twisting maps $\alpha_R$ and $\alpha_N$, the latter being given by the restriction of $\alpha_M$ to $N$. We denote that $N$ is a hom-submodule of $M$ by $N\leq M$ or $M\geq N$, and in case $N$ is a proper subgroup of $M$, by $N<M$ or $M>N$.
\end{definition}

\begin{proposition}[Image and preimage under hom-module morphism]\label{prop:image-pre-image-hom-morphism} Let $f\colon M\to M'$ be a morphism of right (left) $R$-hom-modules, $N\leq M$ and $N'\leq M'$. Then $f(N)$ and $f^{-1}(N')$ are hom-submodules of $M'$ and $M$, respectively.
\end{proposition}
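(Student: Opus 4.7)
The plan is to verify the three defining properties of a hom-submodule (additive subgroup, closure under scalar multiplication, and invariance under the ambient twisting map) for each of $f(N)$ and $f^{-1}(N')$ in turn. Since the right and left cases are symmetric, I would only write out the right $R$-hom-module case explicitly.

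First I would handle $f(N)$. The additive subgroup property is immediate from $f$ being an additive map and $N$ being closed under addition and taking inverses. For closure under scalar multiplication, I would pick an arbitrary element $f(n) \in f(N)$ with $n \in N$ and $r \in R$, and use the module morphism property $f(n \cdot r) = f(n) \cdot r$ together with $n \cdot r \in N$ to conclude $f(n) \cdot r \in f(N)$. For invariance under $\alpha_{M'}$, I would use the relation $\alpha_{M'} \circ f = f \circ \alpha_M$ to write $\alpha_{M'}(f(n)) = f(\alpha_M(n))$, which lies in $f(N)$ because $\alpha_M(N) \subseteq N$.

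Next I would handle $f^{-1}(N')$. Again the additive subgroup property is standard, using that $f$ is additive and $N'$ is an additive subgroup. For closure under scalar multiplication, I would take $m \in f^{-1}(N')$ and $r \in R$, apply $f$ to $m \cdot r$ and use $f(m \cdot r) = f(m) \cdot r \in N'$ (since $f(m) \in N'$ and $N'$ is closed under the scalar action), so $m \cdot r \in f^{-1}(N')$. For invariance under $\alpha_M$, I would similarly compute $f(\alpha_M(m)) = \alpha_{M'}(f(m)) \in N'$, using the compatibility of $f$ with the twisting maps and the invariance $\alpha_{M'}(N') \subseteq N'$, so that $\alpha_M(m) \in f^{-1}(N')$.

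There is no real obstacle here; the proof is a direct verification and is essentially the hom-module analogue of the classical fact that images and preimages of submodules are submodules. The only thing that distinguishes the hom-setting is the invariance condition under the twisting maps, and this is handled in both directions by the single identity $f \circ \alpha_M = \alpha_{M'} \circ f$ built into the definition of a hom-module morphism.
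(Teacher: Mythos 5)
Your proposal is correct and follows essentially the same route as the paper's own proof: a direct verification of the additive subgroup property, closure under scalar multiplication via $f(n\cdot r)=f(n)\cdot r$, and invariance under the twisting maps via $f\circ\alpha_M=\alpha_{M'}\circ f$, for each of $f(N)$ and $f^{-1}(N')$. No gaps.
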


\begin{proof}We see that $f(N)$ and $f^{-1}(N')$ are additive subgroups when considering $f$ as a group homomorphism. Let $r\in R$ and $a'\in f(N)$ be arbitrary. Then there is some $a\in N$ such that $a'=f(a)$, so $a'\cdot r=f(a)\cdot r=f(a\cdot r)\in f(N)$ since $a\cdot r\in N$. Moreover, $\alpha_{M'}(a')=\alpha_{M'}(f(a))=f(\alpha_M(a))=f(\alpha_N(a))\in f(N)$. Now, take any $b\in f^{-1}(N')$. Then there is some $b'\in N'$ such that $f(b)=b'$, so $f(b\cdot r)=f(b)\cdot r= b'\cdot r\in N'$ since $b'\in N'$, and hence $b\cdot r\in f^{-1}(N')$. At last, $f(\alpha_M(b))=\alpha_{M'}(f(b))=\alpha_{M'}(b')=\alpha_{N'}(b')\in N'$, so $\alpha_M(b)\in f^{-1}(N')$. The left case is analogous.
\end{proof}

\begin{proposition}[Intersection of hom-submodules]\label{prop:hom-submodule-intersection} The intersection of any set of hom-submodules of a right (left) $R$-hom-module is a hom-submodule.
\end{proposition}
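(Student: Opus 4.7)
The plan is to argue essentially as in the classical module case, just checking each of the three defining properties of a hom-submodule (additive subgroup, closed under scalar multiplication, and invariant under $\alpha_M$) is preserved under arbitrary intersections. Let $\{N_i\}_{i\in I}$ be an arbitrary collection of hom-submodules of a right $R$-hom-module $M$, and set $N := \bigcap_{i\in I} N_i$.

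First, I would note that $N$ is an additive subgroup of $M$, since it is the intersection of a family of additive subgroups of $M$, a standard group-theoretic fact. Next, for closure under scalar multiplication: given $n \in N$ and $r \in R$, we have $n \in N_i$ for every $i \in I$, so $n \cdot r \in N_i$ for every $i \in I$ since each $N_i$ is closed under scalar multiplication, and therefore $n \cdot r \in N$. Finally, for invariance under $\alpha_M$: if $n \in N$ then $n \in N_i$ for each $i$, and since $\alpha_M(N_i) \subseteq N_i$ for every $i$, we get $\alpha_M(n) \in N_i$ for every $i$, hence $\alpha_M(n) \in N$.

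Putting these three properties together shows that $N$ is a hom-submodule of $M$, and the left $R$-hom-module case is proved by the symmetric argument. There is no real obstacle here; the proof is essentially bookkeeping, since each defining condition of a hom-submodule is of the form ``for every element of the set, a certain element belongs to the set'', a type of condition that is automatically preserved by arbitrary intersections.
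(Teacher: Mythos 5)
Your proof is correct and follows essentially the same route as the paper's: verify that the intersection is an additive subgroup, closed under scalar multiplication, and invariant under $\alpha_M$, each condition being preserved elementwise under arbitrary intersections. The only cosmetic difference is that the paper checks the subgroup property explicitly via $a-b\in N$ rather than citing the standard group-theoretic fact.
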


\begin{proof}We show the case of right $R$-hom-modules; the case of left $R$-hom-modules is analogous. Let $N=\cap_{i\in I} N_i$ be an intersection of hom-submodules $N_i$ of a right $R$-hom-module $M$, where $I$ is some index set. Take any $a,b\in N$ and $j\in I$. Since $a,b\in N_j$ and $N_j$ is an additive subgroup, $(a-b)\in N_j$, and therefore $(a-b)\in N$. For any $r\in R$, $a\cdot r\in N_j$ since $N_j$ is a hom-submodule, and therefore $a\cdot r\in N$. At last, $\alpha_M(a)=\alpha_{N_j}(a)\in N_j$ for the same reason, so $\alpha_M(N)$ is a subset of $N$.
\end{proof}

\begin{definition}[Generating set of hom-submodule]\label{def:generating-set} Let $S$ be a nonempty subset of a right (left) $R$-hom-module $M$. The intersection $N$ of all hom-submodules of $M$ that contain $S$ is called the \emph{hom-submodule generated by $S$}, and $S$ is called a \emph{generating set} of $N$. If there is a finite generating set of $N$, then $N$ is called \emph{finitely generated}.  
\end{definition}

\begin{remark}\label{re:generating-set} The hom-submodule $N$ of a right (left) $R$-hom-module $M$ generated by a nonempty subset $S$  is the smallest hom-submodule of $M$ that contains $S$ in the sense that any other hom-submodule of $M$ that contains $S$ also contains $N$.
\end{remark}

\begin{proposition}[Union of hom-submodules in an ascending chain]\label{prop:hom-submodules-union} Let $M$ be a right (left) $R$-hom-module, and consider an ascending chain $N_1 \leq N_2 \leq \dots$ of hom-submodules of $M$. Then $\cup_{i=1}^\infty N_i$ is a hom-submodule of $M$.
\end{proposition}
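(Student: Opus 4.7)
The plan is to verify directly the three defining properties of a hom-submodule for $N := \cup_{i=1}^\infty N_i$: that $N$ is an additive subgroup of $M$, that it is closed under scalar multiplication by elements of $R$, and that it is invariant under $\alpha_M$. The whole argument will rest on the single structural fact that, because the chain is ascending, any finite collection of elements drawn from the union can be placed in a common $N_j$ for some sufficiently large index $j$.

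For the subgroup property, I would pick arbitrary $a, b \in N$, locate indices $i_a, i_b$ with $a \in N_{i_a}$ and $b \in N_{i_b}$, set $j := \max(i_a, i_b)$, and invoke $N_{i_a}, N_{i_b} \leq N_j$ to conclude $a, b \in N_j$ and hence $a - b \in N_j \subseteq N$. For closure under scalar multiplication, an arbitrary $a \in N$ lies in some $N_i$, and because $N_i$ is a hom-submodule, $a \cdot r \in N_i \subseteq N$ for every $r \in R$ (and symmetrically on the left). For invariance under $\alpha_M$, the same element $a \in N_i$ satisfies $\alpha_M(a) = \alpha_{N_i}(a) \in N_i \subseteq N$, since the twisting map of the hom-submodule $N_i$ is, by the remark following the definition of hom-submodule, the restriction of $\alpha_M$ to $N_i$.

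There is no real obstacle here; the proof is a direct index-chasing exercise essentially identical in shape to the proof of \autoref{prop:hom-submodule-intersection}, only with the roles of intersection and union swapped and the ascending-chain hypothesis used precisely to ensure that pairs of elements can be collected into a single member of the chain. The only mild subtlety worth spelling out is why the hom-associativity condition \eqref{eq:hom-mod-3} for $N$ is automatic: it holds in each $N_i$ as a hom-submodule of $M$, and since each equation instance only involves finitely many elements, it can be checked inside whichever $N_i$ contains them all, after which it descends to $N$ as a subset of $M$ carrying the same scalar action and twisting map.
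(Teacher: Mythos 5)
Your proposal is correct and follows essentially the same argument as the paper: locate each element in a common member of the chain via the maximum of the indices, then use closure of that member under subtraction, scalar multiplication, and $\alpha_M$. The additional remark about why the hom-module axioms descend to the union is harmless but not needed, since a hom-submodule is by definition just an additive subgroup closed under the scalar action and invariant under $\alpha_M$.
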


\begin{proof} Denote $\cup_{i=1}^\infty N_i$ by $N$, and let $a,b\in N$. Then $a\in N_j$ and $b\in N_k$ for some $j,k\in\mathbb{N}_{>0}$, and since $N_j\leq N_{\max(j,k)}$ and $N_k\leq N_{\max(j,k)}$, we have $a,b\in N_{\max(j,k)}$. Hence $(a-b)\in N_{\max(j,k)}\subseteq N$, so $(a-b)\in N$. Take any $r\in R$. Then, since $a\in N_j$, $a\cdot r\in N_j\subseteq N$, so $a\cdot r\in N$ for the right case, and analogously for the left case. Finally, $\alpha_M(a)=\alpha_{N_j}(a)\in N_j\subseteq N$, so $N$ is invariant under $\alpha_M$.
\end{proof}

\begin{proposition}[Sum of hom-submodules]\label{prop:hom-submodule-sum} Let $M$ be a right (left) $R$-hom-module and $N_1, N_2,\dots, N_k$ any finite number of hom-submodules of $M$. Then $\sum_{i=1}^k N_i=N_1+N_2+\dots+N_k$ is a hom-submodule of $M$.
\end{proposition}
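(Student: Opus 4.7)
The plan is to verify directly that $\sum_{i=1}^k N_i$, defined as the set of all finite sums $n_1 + n_2 + \dots + n_k$ with $n_i \in N_i$, satisfies the three defining properties of a hom-submodule: being an additive subgroup of $M$, being closed under the scalar multiplication of $M$, and being invariant under $\alpha_M$. I will handle the right $R$-hom-module case, as the left case is symmetric, and I will do the verification directly rather than by induction on $k$, since each axiom decomposes termwise across the sum without any interaction between the summands.

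First, I would take two arbitrary elements $a = \sum_{i=1}^k a_i$ and $b = \sum_{i=1}^k b_i$ in $\sum_{i=1}^k N_i$, with $a_i, b_i \in N_i$, and observe that $a - b = \sum_{i=1}^k (a_i - b_i)$, where each $a_i - b_i \in N_i$ because $N_i$ is an additive subgroup. Hence $a - b \in \sum_{i=1}^k N_i$, giving the additive subgroup property. Next, for any $r \in R$, right-distributivity \eqref{eq:hom-mod-1} yields $a \cdot r = \sum_{i=1}^k (a_i \cdot r)$, and since each $N_i$ is closed under scalar multiplication by $R$, each $a_i \cdot r \in N_i$, so $a \cdot r \in \sum_{i=1}^k N_i$.

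Finally, since $\alpha_M$ is a group homomorphism on the additive group $M$, we have $\alpha_M(a) = \sum_{i=1}^k \alpha_M(a_i)$, and the invariance $\alpha_M(N_i) \subseteq N_i$ for each $i$ gives $\alpha_M(a) \in \sum_{i=1}^k N_i$. This completes the three verifications.

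There is no real obstacle here; the argument is essentially a routine verification and mirrors the analogous result for ordinary modules, with the only new ingredient being the invariance under $\alpha_M$, which is immediate from $\alpha_M$ being additive and each summand being $\alpha_M$-invariant. The hom-associativity axiom \eqref{eq:hom-mod-3} does not need to be checked separately for the sum, since it is inherited from $M$ once closure under the operations is established.
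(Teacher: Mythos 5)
Your proposal is correct and follows essentially the same route as the paper: verify directly that differences, scalar multiples, and images under $\alpha_M$ of termwise sums decompose termwise, using the additive subgroup property, right-distributivity, and additivity of $\alpha_M$ together with $\alpha_M(N_i)\subseteq N_i$. The paper's proof does exactly these three checks, also noting (as you do) that the hom-module axioms themselves are inherited from $M$.
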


\begin{proof}We prove the right case; the left case is analogous. Let $N:=\sum_{i=1}^kN_i$ and take any $r\in R$, $a_i,b_i\in N_i$. Then $\left(\sum_{i=1}^ka_i\right)\cdot r=\sum_{i=1}^ka_i\cdot r\in N$, and $\sum_{i=1}^k a_i-\sum_{i=1}^kb_i=\sum_{i=1}^k(a_i-b_i)\in N$. At last, $N$ is invariant under $\alpha_N:=\alpha_M\vert_N$ since $\alpha_M\left(\sum_{i=1}^ka_i\right)=\sum_{i=1}^k \alpha_M(a_i)=\sum_{i=1}^k\alpha_{N_1}(a_i)\in N$.
\end{proof}

\begin{corollary}[The modular law for hom-modules]\label{cor:modular-law} Let $M$ be a right (left) $R$-hom-module, and $M_1,M_2,$ and $M_3$ hom-submodules of $M$ with $M_3\leq M_1$. Then the \emph{modular law}
$\left(M_1\cap M_2\right)+M_3=M_1\cap(M_2+M_3)$ holds.
\end{corollary}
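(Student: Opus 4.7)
The plan is to imitate the classical proof of the modular law for ordinary modules, noting that the hom-structure on the two sides of the equation is automatic from the previous two propositions. By \autoref{prop:hom-submodule-intersection} and \autoref{prop:hom-submodule-sum}, both $(M_1\cap M_2)+M_3$ and $M_1\cap(M_2+M_3)$ are already hom-submodules of $M$, so it suffices to verify set-theoretic equality. In particular no separate verification of closure under the scalar multiplication or invariance under $\alpha_M$ is needed for the resulting object; all that work has been done.

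For the forward inclusion, I would take an arbitrary element of $(M_1\cap M_2)+M_3$, write it as $x+y$ with $x\in M_1\cap M_2$ and $y\in M_3$, and observe that $x+y\in M_2+M_3$ tautologically, while $x+y\in M_1$ because $x\in M_1$ and $y\in M_3\subseteq M_1$. Hence $x+y\in M_1\cap(M_2+M_3)$.

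For the reverse inclusion, I would take $z\in M_1\cap(M_2+M_3)$ and decompose $z=x+y$ with $x\in M_2$ and $y\in M_3$. The hypothesis $M_3\leq M_1$ gives $y\in M_1$, and combined with $z\in M_1$ this yields $x=z-y\in M_1$, so $x\in M_1\cap M_2$ and $z=x+y\in(M_1\cap M_2)+M_3$.

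I expect no real obstacle here: the argument is purely additive and uses only the hypothesis $M_3\leq M_1$, exactly as in the classical module-theoretic modular law. The whole point is that, thanks to the propositions just proved, none of the hom-associative data intervene in the verification.
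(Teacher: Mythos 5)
Your proposal is correct and matches the paper's proof in approach: both reduce the statement to the set-theoretic modular law for additive subgroups, with \autoref{prop:hom-submodule-intersection} and \autoref{prop:hom-submodule-sum} guaranteeing that both sides carry the hom-submodule structure. The only difference is that you write out the two inclusions explicitly, whereas the paper simply cites the modular law for additive groups.
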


\begin{proof} The modular law holds for $M_1,M_2$ and $M_3$ when considered as additive groups. By \autoref{prop:hom-submodule-intersection} and \autoref{prop:hom-submodule-sum}, the intersection and sum of any two hom-submodules of $M$ are also hom-submodules of $M$, and hence the modular law holds for $M_1, M_2$ and $M_3$ as hom-modules as well.
\end{proof}

\begin{proposition}[Direct sum of hom-modules]\label{prop:direct-sum} Let $M_1, M_2,\dots, M_k$ be any finite number of right $R$-hom-modules. Endowing the (external) direct sum $M:=\bigoplus_{i=1}^k M_i=M_1\oplus M_2\oplus\dots\oplus M_k$ with the following scalar multiplication and twisting map on $M$, makes it a right $R$-hom-module: 
\begin{align*}
&\bullet\colon M\times R\to M, &(m_1,m_2,\dots,m_k)\bullet r&:=(m_1\cdot r,m_2\cdot r,\dots,m_k\cdot r),&\\
&\alpha_M\colon M\to M, &\alpha_M((m_1,m_2, \dots,m_k))&:=\left(\alpha_{M_1}(m_1),\alpha_{M_2}(m_2), \dots,\alpha_{M_k}(m_k)\right).&
\end{align*}
Here, $(m_1,m_2,\dots,m_k)\in M$, $r\in R$, and $\alpha_{M_{i}}$ is the twisting map on $M_i$ for $1\leq i\leq k$. 
\end{proposition}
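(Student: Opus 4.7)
The plan is to verify the three hom-module axioms \eqref{eq:hom-mod-1}, \eqref{eq:hom-mod-2}, \eqref{eq:hom-mod-3} for the pair $(M,\bullet,\alpha_M)$, together with the preliminary fact that $\alpha_M$ is a group homomorphism on the additive group $M$. Since both $\bullet$ and $\alpha_M$ are defined componentwise, the strategy is simply to reduce each identity on $M$ to the corresponding identity on each factor $M_i$, where it holds by assumption that $M_i$ is a right $R$-hom-module.

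First, I would note that the external direct sum $M = M_1 \oplus \cdots \oplus M_k$ is already an additive group with componentwise addition, and that $\alpha_M$, being the componentwise application of the additive maps $\alpha_{M_i}$, is an additive map $M \to M$. Next, for \eqref{eq:hom-mod-1} and \eqref{eq:hom-mod-2}, I would take arbitrary $(m_1,\dots,m_k), (m'_1,\dots,m'_k) \in M$ and $r, r_1, r_2 \in R$, unfold the definition of $\bullet$ componentwise, and observe that in each component the identity reduces to right- and left-distributivity of the scalar multiplication of $M_i$.

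For the hom-associativity axiom \eqref{eq:hom-mod-3}, I would compute, for $(m_1,\dots,m_k) \in M$ and $r_1,r_2 \in R$, both sides componentwise: the $i$th component of $\alpha_M((m_1,\dots,m_k)) \bullet (r_1 r_2)$ equals $\alpha_{M_i}(m_i) \cdot (r_1 r_2)$, and the $i$th component of $((m_1,\dots,m_k)\bullet r_1)\bullet \alpha_R(r_2)$ equals $(m_i \cdot r_1)\cdot \alpha_R(r_2)$. These agree by \eqref{eq:hom-mod-3} applied in the hom-module $M_i$, and equality of tuples is equality in each component.

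There is no real obstacle here; the proposition is essentially a bookkeeping statement asserting that the hom-module structure transfers to finite direct sums. The only thing to be slightly careful about is making the componentwise reduction cleanly, and noting explicitly that $\alpha_M$ is an additive endomorphism of $M$ so that the triple $(M,+,\alpha_M)$ indeed satisfies the underlying hypotheses of \autoref{def:hom-module} before the three numbered axioms are checked.
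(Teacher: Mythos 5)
Your proposal is correct and follows essentially the same route as the paper's proof: both first note that $M$ is an additive group and that $\alpha_M$ is additive because each $\alpha_{M_i}$ is, and then verify \eqref{eq:hom-mod-1}, \eqref{eq:hom-mod-2}, and \eqref{eq:hom-mod-3} by unfolding the componentwise definitions of $\bullet$ and $\alpha_M$ and invoking the corresponding axiom in each factor $M_i$. No gaps; the paper merely writes out the componentwise computations in full where you summarize them.
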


\begin{proof}Since $M$ is an additive group, what is left to check is that $\alpha_M$ is a group homomorphism, i.e. an additive map, and that $\eqref{eq:hom-mod-1}, \eqref{eq:hom-mod-2}$ and $\eqref{eq:hom-mod-3}$ in \autoref{def:hom-module} holds. This is readily verified by routine calculations. %Let us start with the former. For any $a_i,b_i\in M_i$, 
%\begin{align*}
%&\alpha_M((a_1,a_2,\dots,a_k)+(b_1,b_2,\dots,b_k))=(\alpha_{M_1}(a_1+b_1),\alpha_{M_2}(a_2+b_2),\dots,\alpha_{M_k}(a_k+b_k))\\
%&=(\alpha_{M_1}(a_1), \alpha_{M_2}(a_2),\dots,\alpha_{M_k}(a_k))+(\alpha_{M_1}(b_1), \alpha_{M_2}(b_2),\dots,\alpha_{M_k}(b_k))\\
%&=\alpha_M((a_1,a_2,\dots,a_k))+\alpha_M((b_1,b_2,\dots,b_k)).
%\end{align*}
%Let us now continue with $\eqref{eq:hom-mod-1}, \eqref{eq:hom-mod-2}$, and $\eqref{eq:hom-mod-3}$. For any $r_1,r_2\in R$,
%\allowdisplaybreaks
%\begin{align*}
%&((a_1,a_2,\dots,a_k)+(b_1,b_2,\dots, b_k))\bullet r_1=(a_1+b_1,a_2+b_2,\dots,a_k+b_k)\bullet r_1\\
%&=(a_1\cdot r_1+b_1\cdot r_1, a_2\cdot r_1+b_2\cdot r_1,\dots,a_k\cdot r+ b_k\cdot r)\\
%&=(a_1\cdot r_1,a_2\cdot r_1,\dots,a_k\cdot r)+(b_1\cdot r_1, b_2\cdot r_1,\dots,b_k\cdot r)\\
%&=(a_1,a_2,\dots,a_k)\bullet r_1+(b_1,b_2,\dots, b_k)\bullet r_1,\\
%&(a_1,a_2,\dots, a_k)\bullet (r_1+r_2)=(a_1\cdot(r_1+r_2),a_2\cdot(r_1+r_2),\dots,a_k\cdot(r_1+r_2))\\
%&=(a_1\cdot r_1+a_1\cdot r_2, a_2\cdot r_1+a_2\cdot r_2,\dots,a_k\cdot r_1+a_k\cdot r_2)\\
%&=(a_1,a_2,\dots,a_k)\bullet r_1+(a_1,a_2,\dots,a_k)\bullet r_2,\\
%&\alpha_M((a_1,a_2,\dots,a_k))\bullet (r_1r_2)=\\
%&=(\alpha_{M_1}(a_1)\cdot(r_1r_2),\alpha_{M_2}(a_2)\cdot(r_1r_2),\dots,\alpha_{M_k}(a_k)\cdot(r_1r_2))\\
%&=((a_1\cdot r_1)\cdot\alpha_R(r_2), (a_2\cdot r_1)\cdot\alpha_R(r_2),\dots,(a_k\cdot r_1)\cdot\alpha_R(r_2))\\
%&=\left((a_1,a_2,\dots,a_k)\bullet r_1\right)\bullet\alpha_R(r_2). \qedhere
%\end{align*}
\end{proof}
An analogous result holds for left $R$-hom-modules.

\begin{corollary}[Associativity of the direct sum]\label{cor:isomorphic-direct-sum} For any right (left) $R$-hom-modules $M_1,M_2$, and $M_3$, $(M_1\oplus M_2)\oplus M_3\cong M_1\oplus M_2 \oplus M_3\cong M_1\oplus (M_2 \oplus M_3)$.
\end{corollary}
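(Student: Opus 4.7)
The plan is to exhibit the obvious ``re-parenthesization'' bijections and verify that they are morphisms of hom-modules in the sense of \autoref{def:hom-module}. By \autoref{prop:direct-sum} and its left analogue, all three objects $(M_1\oplus M_2)\oplus M_3$, $M_1\oplus M_2\oplus M_3$, and $M_1\oplus (M_2\oplus M_3)$ carry canonical right (left) $R$-hom-module structures with componentwise scalar multiplication and componentwise twisting map, so there is something well-defined to compare on each side.

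First I would define $f\colon (M_1\oplus M_2)\oplus M_3\to M_1\oplus M_2\oplus M_3$ by $f(((m_1,m_2),m_3)):=(m_1,m_2,m_3)$ and $g\colon M_1\oplus (M_2\oplus M_3)\to M_1\oplus M_2\oplus M_3$ by $g((m_1,(m_2,m_3))):=(m_1,m_2,m_3)$. Both are additive bijections, since associativity of the direct sum already holds for abelian groups. So it remains to check that each intertwines the scalar multiplication and the twisting maps. For $f$, using the componentwise formula for $\bullet$ from \autoref{prop:direct-sum} applied to both $(M_1\oplus M_2)\oplus M_3$ and $M_1\oplus M_2\oplus M_3$, one has
\[
f(((m_1,m_2),m_3)\bullet r)=f(((m_1\cdot r,m_2\cdot r),m_3\cdot r))=(m_1\cdot r,m_2\cdot r,m_3\cdot r)=f(((m_1,m_2),m_3))\bullet r,
\]
and similarly $f\circ\alpha_{(M_1\oplus M_2)\oplus M_3}=\alpha_{M_1\oplus M_2\oplus M_3}\circ f$ because both twisting maps act componentwise as $\alpha_{M_i}$ in the $i$-th slot. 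The verification for $g$ is identical with the pair in the second coordinate instead of the first, and the left-module case is symmetric.

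There is no real obstacle here; the statement is essentially the associativity of the direct sum for underlying abelian groups, together with the observation that both the scalar multiplication $\bullet$ and the twisting map $\alpha$ on a direct sum of hom-modules were defined componentwise in \autoref{prop:direct-sum}, so they are automatically preserved by any bijection that merely re-parenthesizes coordinates. Hence $f$ and $g$ are isomorphisms of right (left) $R$-hom-modules, yielding the chain $(M_1\oplus M_2)\oplus M_3\cong M_1\oplus M_2\oplus M_3\cong M_1\oplus (M_2\oplus M_3)$.
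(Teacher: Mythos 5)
Your proposal is correct and follows essentially the same route as the paper: the paper also uses the natural re-parenthesization bijection $f(((m_1,m_2),m_3))=(m_1,m_2,m_3)$, verifies compatibility with $\bullet$ and the twisting maps via the componentwise formulas of \autoref{prop:direct-sum}, and notes that the second isomorphism and the left cases are analogous. No gaps.
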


\begin{proof}We prove the right case of the first isomorphism. The proof of the second isomorphism is similar, as are all the left cases. Considered as additive groups, $M:=(M_1\oplus M_2)\oplus M_3\cong M_1\oplus M_2\oplus M_3=:M'$ by the natural isomorphism $f(((m_1,m_2),m_3))=(m_1,m_2,m_3)$ for any $((m_1,m_2),m_3)\in M$. Let $r\in R$ be arbitrary. Then 
\begin{align*}
&f(((m_1,m_2),m_3)\bullet r)=f(((m_1,m_2)\bullet r,m_3\cdot r))=f(((m_1\cdot r,m_2\cdot r),m_3\cdot r))\\
&=(m_1\cdot r,m_2\cdot r, m_3\cdot r)=f(((m_1,m_2),m_3))\bullet r,\\
&f(\alpha_M((m_1,m_2),m_3))=f(((\alpha_{M_1}(m_1),\alpha_{M_2}(m_2)),\alpha_{M_3}(m_3)))\\
&=(\alpha_{M_1}(m_1),\alpha_{M_2}(m_2),\alpha_{M_3}(m_3))=\alpha_{M'}(f(((m_1,m_2),m_3))).&\qedhere
\end{align*}
\end{proof}

\begin{proposition}[Quotient hom-module] Let $M_R$ be a right $R$-hom-module with twisting map $\alpha_M$ on $M$. Let $N_R\leq M_R$ and consider the additive groups $M$ and $N$ of $M_R$ and $N_R$, respectively. Form the quotient group $M/N$ with elements of the form $m+N$ for $m\in M$. Then $M/N$ becomes a right $R$-hom-module when endowed with the following twisting map and scalar multiplication for $m\in M$ and $r\in R$:
\begin{align*}
&\bullet\colon  M/N\times R\to M/N, & (m+N)\bullet r&:=m\cdot r+N,&\\
&\alpha_{M/N}\colon M/N \to M/N, & \alpha_{M/N}(m+N)&:=\alpha_M(m)+N.&
\end{align*}
\end{proposition}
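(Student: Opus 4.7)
The proof is a routine verification, so the main task is to set up the bookkeeping carefully. The plan is to first establish well-definedness of the two operations on equivalence classes, then check that $\alpha_{M/N}$ is an additive map, and finally verify the three hom-module axioms \eqref{eq:hom-mod-1}, \eqref{eq:hom-mod-2}, \eqref{eq:hom-mod-3} by lifting them from $M_R$.

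For well-definedness of $\bullet$, suppose $m + N = m' + N$, so that $m - m' \in N$. Since $N_R \leq M_R$ is a hom-submodule, it is closed under the scalar multiplication of $M$, hence $(m - m') \cdot r \in N$ for any $r \in R$. Right-distributivity \eqref{eq:hom-mod-1} in $M_R$ then gives $m \cdot r - m' \cdot r \in N$, so $m \cdot r + N = m' \cdot r + N$, as required. For well-definedness of $\alpha_{M/N}$, the same assumption $m - m' \in N$ combined with invariance $\alpha_M(N) \subseteq N$ yields $\alpha_M(m) - \alpha_M(m') = \alpha_M(m - m') \in N$, using that $\alpha_M$ is a group homomorphism.

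Next, $\alpha_{M/N}$ is an additive map because $\alpha_{M/N}((m_1+N) + (m_2+N)) = \alpha_M(m_1+m_2) + N = \alpha_M(m_1) + \alpha_M(m_2) + N = \alpha_{M/N}(m_1+N) + \alpha_{M/N}(m_2+N)$, again by additivity of $\alpha_M$. This makes $(M/N, \alpha_{M/N})$ an additive group with twisting map, matching the data required by \autoref{def:hom-module}.

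Finally, for the three axioms, take arbitrary $m_1, m_2 \in M$ and $r_1, r_2 \in R$. Right-distributivity follows from $((m_1+N)+(m_2+N)) \bullet r_1 = (m_1+m_2)\cdot r_1 + N = m_1\cdot r_1 + m_2\cdot r_1 + N$ via \eqref{eq:hom-mod-1} in $M_R$. Left-distributivity is analogous, using \eqref{eq:hom-mod-2}. Hom-associativity is the crucial one: $\alpha_{M/N}(m_1+N) \bullet (r_1 r_2) = \alpha_M(m_1) \cdot (r_1 r_2) + N = (m_1 \cdot r_1) \cdot \alpha_R(r_2) + N = ((m_1+N) \bullet r_1) \bullet \alpha_R(r_2)$, where the middle equality is exactly \eqref{eq:hom-mod-3} in $M_R$. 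The left case is symmetric. The only thing that could go wrong is well-definedness, and that is precisely where the two defining conditions of a hom-submodule (closure under scalar multiplication and $\alpha_M$-invariance) are used, so the proof relies essentially on both parts of the hom-submodule definition.
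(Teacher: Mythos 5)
Your proof is correct and follows essentially the same route as the paper's: establish well-definedness of the scalar multiplication and of $\alpha_{M/N}$ using closure of $N$ under scalar multiplication and $\alpha_M$-invariance respectively, check additivity of $\alpha_{M/N}$, and then verify \eqref{eq:hom-mod-1}, \eqref{eq:hom-mod-2}, and \eqref{eq:hom-mod-3} by lifting them directly from $M_R$. No gaps.
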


\begin{proof}First, let us make sure that the scalar multiplication and twisting map are both well-defined. To this end, take two arbitrary elements of $M/N$. They are of the form $m_1+N$ and $m_2+N$ for some $m_1,m_2\in M$. If $m_1+N=m_2+N$, then $(m_1-m_2)\in N$, and since $N_R$ is a right $R$-hom-module, $(m_1-m_2)\cdot r_1\in N$ for any $r_1\in R$. Then $(m_1\cdot r_1 - m_2\cdot r_1)\in N$, so $m_1\cdot r_1 + N = m_2\cdot r_1 + N$, and hence $(m_1+N)\bullet r_1=(m_2+N)\bullet r_1$, so the scalar multiplication is well-defined. Now, since $(m_1-m_2)\in N$, $\alpha_M(m_1-m_2)\in N$ due to the fact that $N_R\leq M_R$. On the other hand, $\alpha_M(m_1-m_2)=\alpha_M(m_1)-\alpha_M(m_2)$, so $\left(\alpha_M(m_1)-\alpha_M(m_2)\right)\in N$. Then $\alpha_M(m_1)+N=\alpha_M(m_2)+N$, and therefore $\alpha_{M/N}(m_1+N)=\alpha_{M/N}(m_2+N)$, which proves that $\alpha_{M/N}$ is well-defined. Furthermore, $\alpha_{M/N}$ is a group homomorphism since for any $(m_3+N),(m_4+N)\in M/N$ where $m_3,m_4\in M$,
\begin{align*}
&\alpha_{M/N}\left(\left(m_3+N\right)+\left(m_4+N\right)\right)=\alpha_{M/N}\left((m_3+m_4)+N\right)=\alpha_M(m_3+m_4)+N\\
&=\left(\alpha_M(m_3)+\alpha_M(m_4)\right)+N=\left(\alpha_M(m_3)+N\right)+\left(\alpha_M(m_4)+N\right)\\
&=\alpha_{M/N}\left(m_3+N\right)+\alpha_{M/N}\left(m_4+N\right).
\end{align*}
By straightforward calculations, one readily verifies that the three hom-module axioms \eqref{eq:hom-mod-1}, \eqref{eq:hom-mod-2}, and \eqref{eq:hom-mod-3} in \autoref{def:hom-module} hold. %For any $r_2$ and $r_3$ in $R$,
%\allowdisplaybreaks
%\begin{align*}
%&\left((m_3+N)+(m_4+N)\right)\bullet r_2=\left((m_3+m_4)+N\right)\bullet r_2=(m_3+m_4)\cdot r_2+N\\
%&=(m_3\cdot r_2 + m_4\cdot r_2)+N=\left(m_3\cdot r_2+N\right)+\left(m_4\cdot r_2+N\right)\\
%&=\left(m_3+N\right)\bullet r_2+\left(m_4+N\right)\bullet r_2,\\
%&\left(m_3+N\right)\bullet(r_2+r_3)=m_3\cdot(r_2+r_3)+N=(m_3\cdot r_2+m_3\cdot r_3)+N\\
%&=\left(m_3\cdot r_2+N\right)+\left(m_3\cdot r_3+N\right)=(m_3+N)\bullet r_2+(m_3+N)\bullet r_3,\\
%&\alpha_{M/N}(m_3+N)\bullet (r_2r_3)=\left(\alpha_M(m_3)+N\right)\cdot (r_2r_3)=\alpha_M(m_3)\cdot (r_2r_3)+N\\
%&=(m_3\cdot r_2)\cdot\alpha_R(r_3)+N=\left(m_3\cdot r_2+N\right)\bullet\alpha_R(r_3)=\left((m_3+N)\bullet r_2\right)\bullet\alpha_R(r_3).&\qedhere
%\end{align*}
\end{proof}
Again, an analogous result holds for left $R$-hom-modules as well.

\begin{corollary}[The natural projection]\label{cor:natural-projection} Let $M$ be a right (left) $R$-hom-module with $N\leq M$. Then the \emph{natural projection} $\pi\colon M\to M/N$ defined by $\pi(m)= m+N$ for any $m\in M$ is a surjective morphism of hom-modules.
\end{corollary}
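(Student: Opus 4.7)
The plan is to unpack the definition of a morphism of right $R$-hom-modules and verify each clause directly from the construction of $M/N$ carried out in the preceding proposition. There is essentially no obstacle: all the required identities hold on the nose by definition of $\bullet$ and $\alpha_{M/N}$, so the work is bookkeeping rather than insight.

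First I would note that $\pi\colon M\to M/N$ is the usual natural projection of additive groups, hence additive and surjective; these facts are inherited from group theory and do not require any of the hom-module structure. Surjectivity is already enough to dispatch that part of the claim, since every coset has the form $m+N=\pi(m)$ for some $m\in M$.

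Next I would verify the two structural compatibilities required by the definition of a morphism of hom-modules. For scalar multiplication, given $m\in M$ and $r\in R$, the definition $(m+N)\bullet r:=m\cdot r+N$ gives
\[
\pi(m\cdot r)=m\cdot r+N=(m+N)\bullet r=\pi(m)\bullet r.
\]
For compatibility with the twisting maps, the definition $\alpha_{M/N}(m+N):=\alpha_M(m)+N$ gives
\[
\pi(\alpha_M(m))=\alpha_M(m)+N=\alpha_{M/N}(m+N)=\alpha_{M/N}(\pi(m)),
\]
so $\pi\circ\alpha_M=\alpha_{M/N}\circ\pi$. The left-module case is entirely symmetric, swapping the side of the scalar action. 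This completes the verification, and the expected difficulty is nil; the corollary is essentially a restatement of how $\bullet$ and $\alpha_{M/N}$ were defined in the previous proposition.
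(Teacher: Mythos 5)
Your proof is correct and follows the same route as the paper's: surjectivity and additivity come from the group-theoretic natural projection, and the compatibility with scalar multiplication and with the twisting maps holds on the nose from the definitions of $\bullet$ and $\alpha_{M/N}$. No differences worth noting.
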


\begin{proof} We know that $\pi$ is a surjective group homomorphism, and for any $m\in M$ and $r\in R$, $\pi(m\cdot r)=m\cdot r+N=(m+N)\bullet r=\pi(m)\bullet r$ for the right case, and analogously for the left case. We also have that $\pi(\alpha_M(m))=\alpha(m)+N=\alpha_{M/N}(m+N)=\alpha_{M/N}(\pi(m))$, completing the proof.
\end{proof}

\begin{corollary}[Hom-submodules of quotient hom-modules]\label{cor:quotient-submodule} Let $M$ be a right (left) $R$-hom-module with $N\leq M$. If $L$ is a hom-submodule of $M/N$, then $L=K/N$ for some hom-submodule $K$ of $M$ that contains $N$.
\end{corollary}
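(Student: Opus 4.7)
The plan is to use the natural projection $\pi\colon M\to M/N$ from \autoref{cor:natural-projection} and simply take the preimage of $L$. Concretely, I would set $K:=\pi^{-1}(L)$ and argue that this $K$ satisfies all the required properties. The key observation is that $\pi$ is already known to be a morphism of hom-modules, so the heavy lifting about invariance under the twisting map and closure under scalar multiplication has already been done for us.

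The proof breaks into three steps. First, $K$ is a hom-submodule of $M$: since $L\leq M/N$ and $\pi\colon M\to M/N$ is a morphism of right (respectively left) $R$-hom-modules, this follows immediately from \autoref{prop:image-pre-image-hom-morphism} applied to $f=\pi$ and $N'=L$. Second, $N\subseteq K$: the zero element of $M/N$ is $0+N$, and $0+N\in L$ because $L$ is in particular an additive subgroup; hence any $n\in N$ satisfies $\pi(n)=n+N=0+N\in L$, so $n\in\pi^{-1}(L)=K$. Third, $K/N=L$ as subsets of $M/N$: an element $k+N$ of $K/N$ satisfies $k+N=\pi(k)\in L$ by the very definition of $K=\pi^{-1}(L)$, and conversely every element of $L$ has the form $m+N$ for some $m\in M$, and $m+N\in L$ forces $m\in\pi^{-1}(L)=K$, so $m+N\in K/N$.

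There is no genuine obstacle here; the statement is essentially the lattice correspondence between hom-submodules of $M/N$ and hom-submodules of $M$ containing $N$, and once one realises that $\pi$ is a hom-module morphism (\autoref{cor:natural-projection}) and that preimages of hom-submodules are hom-submodules (\autoref{prop:image-pre-image-hom-morphism}), the argument is a routine set-theoretic verification. The only mild care needed is to note that the condition $\alpha_M(K)\subseteq K$ comes for free from the preimage proposition rather than having to be checked directly, which is precisely why the earlier development pays off.
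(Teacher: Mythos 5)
Your proof is correct and follows essentially the same route as the paper: both take $K=\pi^{-1}(L)$ for the natural projection $\pi$ and invoke \autoref{prop:image-pre-image-hom-morphism} to see that $K$ is a hom-submodule. You are in fact slightly more careful than the paper, which omits the (easy) verification that $N\subseteq K$ and simply concludes $L=\pi(K)=K/N$ from surjectivity of $\pi$.
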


\begin{proof}Let $L$ be a hom-submodule of $M/N$. Using the natural projection $\pi\colon M\to M/N$ from \autoref{cor:natural-projection}, we know that $K=\pi^{-1}(L)$ is a hom-submodule of $M$ since it is the preimage of a morphism of hom-submodules (cf. \autoref{prop:image-pre-image-hom-morphism}). By the surjectivity of $\pi$, $\pi(K)=\pi(\pi^{-1}(L))=L$, so $L=\pi(K)=K/N$.
\end{proof}

\begin{theorem}[The first isomorphism theorem for hom-modules]\label{thm:first-isomorphism-thm} Let $f\colon M\to M'$ be a morphism of right (left) $R$-hom-modules. Then $\ker f$ is a hom-submodule of $M$, $\im f$ is a hom-submodule of $M'$, and $M/\ker f\cong \im f$.
\end{theorem}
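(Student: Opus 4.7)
The plan is to reduce everything to the classical first isomorphism theorem for additive groups and then check that the twisting maps and the scalar multiplication behave well.

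First I would handle the two ``$\leq$'' assertions. For $\ker f$, I would observe that $\{0\}$ is trivially a hom-submodule of $M'$, so that $\ker f = f^{-1}(\{0\})$ is a hom-submodule of $M$ by \autoref{prop:image-pre-image-hom-morphism}. Likewise, $M$ is a hom-submodule of itself, so $\im f = f(M)$ is a hom-submodule of $M'$ by the same proposition. (One could also verify both directly: if $m\in\ker f$ and $r\in R$, then $f(m\cdot r)=f(m)\cdot r=0$ and $f(\alpha_M(m))=\alpha_{M'}(f(m))=0$, and similarly for $\im f$; but invoking \autoref{prop:image-pre-image-hom-morphism} is cleaner.)

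For the isomorphism, I would define $\bar f\colon M/\ker f\to\im f$ by $\bar f(m+\ker f):=f(m)$. The classical first isomorphism theorem for additive groups already gives that $\bar f$ is a well-defined group isomorphism, so the only thing left is to show $\bar f$ is a morphism of hom-modules in the sense of Definition 7. This requires two compatibility checks: for the scalar multiplication,
\[
\bar f\bigl((m+\ker f)\bullet r\bigr)=\bar f(m\cdot r+\ker f)=f(m\cdot r)=f(m)\cdot r=\bar f(m+\ker f)\cdot r,
\]
and for the twisting maps,
\[
\bar f\bigl(\alpha_{M/\ker f}(m+\ker f)\bigr)=\bar f(\alpha_M(m)+\ker f)=f(\alpha_M(m))=\alpha_{M'}(f(m))=\alpha_{\im f}\bigl(\bar f(m+\ker f)\bigr),
\]
using the definitions of $\alpha_{M/\ker f}$ and of $\bullet$ on the quotient, and the fact that $\alpha_{\im f}$ is the restriction of $\alpha_{M'}$. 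The left-module case is symmetric.

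There is no real obstacle here; the theorem reduces to the group-theoretic statement plus two one-line verifications, and well-definedness of the quotient structure has already been established in the preceding proposition. The one thing worth being careful about is to use $\alpha_{\im f}=\alpha_{M'}\vert_{\im f}$ (as opposed to $\alpha_{M'}$ itself) in the final compatibility check, so that $\bar f$ lands inside $\im f$ with its own hom-module structure rather than merely inside $M'$.
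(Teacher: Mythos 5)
Your proposal is correct and follows essentially the same route as the paper: both obtain $\ker f$ and $\im f$ as hom-submodules via \autoref{prop:image-pre-image-hom-morphism} (as preimage of $0$ and image of $M$, respectively), and both define the induced map on $M/\ker f$, cite the classical group-theoretic statement for well-definedness and bijectivity, and verify the two compatibility conditions with the scalar multiplication and the twisting maps. Your closing remark about using $\alpha_{\im f}=\alpha_{M'}\vert_{\im f}$ matches the paper's final step exactly.
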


\begin{proof}We prove the case of right $R$-hom-modules; the case of left $R$-hom-modules is analogous. By definition, $\ker f$ is the preimage of the hom-submodule $0$ of $M'$, and hence it is a hom-submodule of $M$ by \autoref{prop:image-pre-image-hom-morphism}. Now, $\im f=f(M)$, so by the same proposition, $\im f$ is a hom-submodule of $M'$. The map $g\colon M/\ker f\to \im f$ defined by $g(m+\ker f)=f(m)$ for any $(m+\ker f)\in M/\ker f$ is a well-defined group isomorphism. Furthermore, $g((m+\ker f)\bullet r)=g(m\cdot r+\ker f)=f(m\cdot r)=f(m)\cdot r=g(m+\ker f)\cdot r$. At last, $g(\alpha_{M/\ker f}(m+\ker f))=g(\alpha_M(m)+\ker f)=f(\alpha_M(m))=\alpha_{M'}(f(m))=\alpha_{\im f}(f(m))=\alpha_{\im f}(g(m+\ker f))$, which completes the proof.
\end{proof}

\begin{theorem}[The second isomorphism theorem for hom-modules]\label{thm:second-isomorphism-thm} Let $M$ be a right (left) $R$-hom-module with $N\leq M$ and $L\leq M$. Then $N/(N\cap L)\cong (N+L)/L$.
\end{theorem}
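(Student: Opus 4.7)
The plan is to mimic the classical proof: construct a natural hom-module morphism $f \colon N \to (N+L)/L$, verify it is surjective with kernel $N \cap L$, and then invoke the first isomorphism theorem (\autoref{thm:first-isomorphism-thm}). First I would point out that the objects on both sides are actually well-defined hom-modules: $N+L$ is a hom-submodule of $M$ by \autoref{prop:hom-submodule-sum} and $N \cap L$ is a hom-submodule by \autoref{prop:hom-submodule-intersection}; $L$ is a hom-submodule of $N+L$ and $N \cap L$ is a hom-submodule of $N$, so the quotient hom-modules $(N+L)/L$ and $N/(N \cap L)$ make sense.

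Next I would define $f \colon N \to (N+L)/L$ on the right case by $f(n) := n + L$ for any $n \in N$ (and symmetrically on the left). Since $N \subseteq N+L$, this is well-defined, and the computation that $f$ is additive is immediate. For the scalar multiplication one has $f(n \cdot r) = n\cdot r + L = (n+L)\bullet r = f(n)\bullet r$, and for the twisting maps the chain $f(\alpha_N(n)) = \alpha_M(n) + L = \alpha_{(N+L)/L}(n+L) = \alpha_{(N+L)/L}(f(n))$ shows $f$ intertwines the twistings. Hence $f$ is a morphism of right $R$-hom-modules.

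Surjectivity is clear: an arbitrary coset in $(N+L)/L$ has a representative $n + \ell$ with $n \in N$, $\ell \in L$, and $(n+\ell) + L = n + L = f(n)$. For the kernel, $f(n) = L$ iff $n \in L$, which combined with $n \in N$ gives $\ker f = N \cap L$. Finally, \autoref{thm:first-isomorphism-thm} yields $N/(N\cap L) = N/\ker f \cong \im f = (N+L)/L$, as desired. The left case follows by the symmetric construction.

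I do not expect any genuine obstacle here: every step is forced by the definitions, and the only thing to be careful about is distinguishing the twisting maps on $N$, $L$, $N\cap L$, $N+L$ and their quotients — but by the remark following the hom-submodule definition, all of these are simply restrictions of $\alpha_M$, so the verifications reduce at once to identities that already hold in $M$.
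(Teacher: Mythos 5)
Your proposal is correct and follows essentially the same route as the paper's own proof: the same map $f(n)=n+L$, the same verification that it is a surjective hom-module morphism with kernel $N\cap L$, and the same appeal to \autoref{thm:first-isomorphism-thm}. No gaps.
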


\begin{proof}By \autoref{prop:hom-submodule-intersection}, $N\cap L$ is a hom-submodule of $N$ and by \autoref{prop:hom-submodule-sum}, $N+L$ is a hom-module with $L=(0+L)\leq (N+L)$, so the expression makes sense. The map $f\colon N\to (N+L)/L$ defined by $f(n)=n+L$ for any $n\in N$ is a group homomorphism. Furthermore, it is surjective since for any $((n+l)+L)\in (N+L)/L$. We have that $(n+l)+L=(n+L)+(l+L)=n+L+(0+L)=n+L=f(n)$. For any $r\in R$, $f(n\cdot r)=n\cdot r+L=(n+L)\bullet r= f(n)\bullet r$ (similarly for the left case), and moreover, $f(\alpha_N(n))=\alpha_N(n)+L=(\alpha_N(n)+\alpha_L(0))+L=\alpha_{N+L}(n+0)+L=\alpha_{(N+L)/L}(n+L)=\alpha_{(N+L)/L}(f(n))$. We also see that $\ker f=N\cap L$, so by \autoref{thm:first-isomorphism-thm}, $N/(N\cap L)\cong (N+L)/L$.
\end{proof}

\begin{theorem}[The third isomorphism theorem for hom-modules]\label{thm:third-isomorphism-thm} Let $M$ be a right (left) $R$-hom-module with $L\leq N\leq M$. Then $N/L$ is a hom-submodule of $M/L$ and $(M/L)/(N/L)\cong M/N$.
\end{theorem}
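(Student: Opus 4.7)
The plan is to mimic the classical proof of the third isomorphism theorem for modules, using the first isomorphism theorem for hom-modules (\autoref{thm:first-isomorphism-thm}) as the engine. There are two things to establish: first that $N/L$ is a hom-submodule of $M/L$, and then the asserted isomorphism.

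For the first claim, I would observe that since $L \leq N$, the quotient group $N/L$ sits inside $M/L$ as $\{n + L : n \in N\}$. Closure under the scalar multiplication $\bullet$ on $M/L$ is immediate: $(n+L)\bullet r = n\cdot r + L$, and $n\cdot r \in N$ because $N \leq M$. Invariance under $\alpha_{M/L}$ follows similarly: $\alpha_{M/L}(n+L) = \alpha_M(n)+L = \alpha_N(n)+L \in N/L$, using that $\alpha_N$ is the restriction of $\alpha_M$ to $N$. Additivity and being an additive subgroup are inherited from the classical group-theoretic statement.

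For the isomorphism, I would define $f\colon M/L \to M/N$ by $f(m+L) = m+N$. Well-definedness follows from $L \subseteq N$: if $m_1 + L = m_2 + L$, then $m_1 - m_2 \in L \subseteq N$, so $m_1 + N = m_2 + N$. The map $f$ is a surjective group homomorphism in the classical setting; I would then verify compatibility with scalar multiplication via
\begin{equation*}
f((m+L)\bullet r) = f(m\cdot r + L) = m\cdot r + N = (m+N)\bullet r = f(m+L)\bullet r,
\end{equation*}
and compatibility with the twisting maps via
\begin{equation*}
f(\alpha_{M/L}(m+L)) = f(\alpha_M(m)+L) = \alpha_M(m)+N = \alpha_{M/N}(m+N) = \alpha_{M/N}(f(m+L)).
\end{equation*}
Thus $f$ is a surjective morphism of hom-modules, with the analogous verification in the left case.

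Finally, the kernel of $f$ consists of those $m + L$ with $m \in N$, which is precisely $N/L$. Applying \autoref{thm:first-isomorphism-thm} to $f$ yields $(M/L)/(N/L) \cong \im f = M/N$, completing the proof. There is no real obstacle here; the only point requiring a moment of care is the well-definedness of $f$, which is exactly where the hypothesis $L \leq N$ is used.
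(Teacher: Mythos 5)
Your proposal is correct and follows essentially the same route as the paper: define $f\colon M/L\to M/N$ by $f(m+L)=m+N$, check it is a surjective hom-module morphism with kernel $N/L$, and apply the first isomorphism theorem. The only cosmetic difference is that the paper obtains $N/L\leq M/L$ as the image of $N$ under the natural projection (via \autoref{cor:natural-projection} and \autoref{prop:image-pre-image-hom-morphism}), whereas you verify the hom-submodule conditions directly; both are fine.
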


\begin{proof}According to \autoref{cor:natural-projection}, the natural projection $\pi\colon M\to M/L$ is a morphism of right (left) hom-modules, so hom-submodules of $M$ are mapped to hom-submodules of $M/L$. Since $N\leq M$, $N/L=\pi(N)\leq\pi(M)=M/L$, using that $\pi$ is surjective. The map $f\colon M/L\to M/N$ defined by $f(m+L)=m+N$ for any $(m+L)\in M/L$ is a well-defined surjective group homomorphism. Moreover, for any $r\in R$, $f((m+L)\bullet r)=f(m\cdot r+L)=m\cdot r+N=(m+N)\bullet r=f(m+L)\bullet r$ (analogously for the left case), and $f(\alpha_{M/L}(m+L))=f(\alpha_M(m)+L)=\alpha_M(m)+N=\alpha_{M/N}(m+N)=\alpha_{M/N}(f(m+L))$. We also see that $\ker f=N/L$, so using \autoref{thm:first-isomorphism-thm}, $(M/L)/\ker f=(M/L)/(N/L)\cong \im f= M/N$.
\end{proof}

\subsection{The hom-Noetherian conditions}
Recall that a family $\mathcal{F}$ of subsets of a set $S$ satisfies the \emph{ascending chain condition} if there is no properly ascending infinite chain $S_1\subset S_2\subset\ldots$ of subsets from $\mathcal{F}$. Furthermore, an element in $\mathcal{F}$ is called a \emph{maximal element} of $\mathcal{F}$ provided there is no element of $\mathcal{F}$ that properly contains that element.

\begin{proposition}[The hom-Noetherian conditions for hom-modules]\label{prop:hom-noetherian-conditions-for-modules} Let $M$ be a right (left) $R$-hom-module. Then the following conditions are equivalent:
\begin{enumerate}[label=(NM\arabic*)]
	\item\label{eq:hom-noetherian-module-1}$M$ satisfies the ascending chain condition on its hom-submodules.
	\item\label{eq:hom-noetherian-module-2} Any nonempty family of hom-submodules of $M$ has a maximal element.
	\item\label{eq:hom-noetherian-module-3} Any hom-submodule of $M$ is finitely generated.
\end{enumerate}
\end{proposition}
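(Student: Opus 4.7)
The plan is to establish the cycle \ref{eq:hom-noetherian-module-1}$\Rightarrow$\ref{eq:hom-noetherian-module-2}$\Rightarrow$\ref{eq:hom-noetherian-module-3}$\Rightarrow$\ref{eq:hom-noetherian-module-1}, closely mirroring the classical argument for noetherian modules while verifying that every construction remains inside the category of hom-submodules.

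For \ref{eq:hom-noetherian-module-1}$\Rightarrow$\ref{eq:hom-noetherian-module-2}, I would argue by contrapositive: if a nonempty family $\mathcal{F}$ of hom-submodules of $M$ has no maximal element, then starting from any $N_1\in\mathcal{F}$ one may repeatedly pick $N_{i+1}\in\mathcal{F}$ with $N_i<N_{i+1}$ (using dependent choice), producing an infinite strictly ascending chain and contradicting \ref{eq:hom-noetherian-module-1}.

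For \ref{eq:hom-noetherian-module-2}$\Rightarrow$\ref{eq:hom-noetherian-module-3}, fix a hom-submodule $N$ of $M$ and let $\mathcal{F}$ be the family of all finitely generated hom-submodules contained in $N$. This family is nonempty (it contains the trivial hom-submodule), so by \ref{eq:hom-noetherian-module-2} it admits a maximal element $K$ with some finite generating set $\{g_1,\dots,g_k\}$. If $K<N$, pick any $x\in N\setminus K$ and let $L$ be the hom-submodule of $M$ generated by $\{g_1,\dots,g_k,x\}$. By \autoref{re:generating-set}, $L\leq N$ (since $N$ is a hom-submodule containing the generators), $L$ is finitely generated by construction, and $K<L$ because $x\in L\setminus K$. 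This contradicts the maximality of $K$, so $K=N$ and $N$ is finitely generated.

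For \ref{eq:hom-noetherian-module-3}$\Rightarrow$\ref{eq:hom-noetherian-module-1}, given an ascending chain $N_1\leq N_2\leq\cdots$ of hom-submodules, form $N=\bigcup_{i=1}^\infty N_i$, which is a hom-submodule by \autoref{prop:hom-submodules-union}. By \ref{eq:hom-noetherian-module-3}, $N$ has a finite generating set $\{g_1,\dots,g_k\}$; each $g_j$ lies in some $N_{i_j}$, and for $m=\max_j i_j$ all $g_j$ belong to $N_m$. Since $N_m$ is itself a hom-submodule containing the generators of $N$, \autoref{re:generating-set} yields $N\leq N_m$, and combined with $N_m\leq N$ this forces $N_i=N_m$ for every $i\geq m$, so the chain terminates.

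The main obstacle is largely organizational rather than mathematical: all of the ``hom'' bookkeeping has been pushed into the hom-submodule infrastructure built earlier. The single point requiring care is that the hom-submodule generated by a set is the \emph{smallest} hom-submodule containing that set (\autoref{re:generating-set}); this minimality is what makes the enlargement step in \ref{eq:hom-noetherian-module-2}$\Rightarrow$\ref{eq:hom-noetherian-module-3} legitimate, and what allows us, in \ref{eq:hom-noetherian-module-3}$\Rightarrow$\ref{eq:hom-noetherian-module-1}, to conclude that a finite set of generators lying in $N_m$ drags the entire hom-submodule $N$ into $N_m$. Once this is in hand, the twisting map $\alpha_M$ adds no genuine complication to the classical proof.
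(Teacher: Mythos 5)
Your proposal is correct and follows essentially the same route as the paper: the same cyclic implication, the same contrapositive chain-building for \ref{eq:hom-noetherian-module-1}$\Rightarrow$\ref{eq:hom-noetherian-module-2}, the same maximal-finitely-generated-submodule argument for \ref{eq:hom-noetherian-module-2}$\Rightarrow$\ref{eq:hom-noetherian-module-3}, and the same use of \autoref{prop:hom-submodules-union} and \autoref{re:generating-set} for \ref{eq:hom-noetherian-module-3}$\Rightarrow$\ref{eq:hom-noetherian-module-1}. The only (cosmetic) difference is that you adjoin $x$ to an explicit finite generating set of $K$ rather than to $K$ itself, which makes the finite generation of the enlarged submodule slightly more immediate.
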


\begin{proof} The following proof is an adaptation of a proof that can be found in \cite{GW04} to the hom-associative setting.

\ref{eq:hom-noetherian-module-1}$\implies$\ref{eq:hom-noetherian-module-2}: Let $\mathcal{F}$ be a nonempty family of hom-submodules of $M$ that does not have a maximal element and pick an arbitrary hom-submodule $S_1$ in $\mathcal{F}$. Since $S_1$ is not a maximal element, there exists some $S_2\in\mathcal{F}$ such that $S_1<S_2$. Now, $S_2$ is not a maximal element either, so there exists some $S_3\in\mathcal{F}$ such that $S_2<S_3$. Continuing in this manner we get an infinite chain of hom-submodules $S_1<S_2<\dots$, which proves the contrapositive statement.

\ref{eq:hom-noetherian-module-2}$\implies$\ref{eq:hom-noetherian-module-3}: Assume \ref{eq:hom-noetherian-module-2} holds, let $N$ be an arbitrary hom-submodule of $M$, and $\mathcal{G}$ the family of all finitely generated hom-submodules of $N$. Since the zero module is a hom-submodule of $N$ that is finitely generated, $\mathcal{G}$ is clearly nonempty and thus contains a maximal element $L$ by assumption. If $N=L$, we are done, so assume the opposite and take some $n\in N\backslash L$. Now, let $K$ be the hom-submodule of $N$ generated by the set $L\cup\{n\}$. Then $K$ is finitely generated as well, so $K\in \mathcal{G}$. Moreover, $L<K$, which is a contradiction since $L$ is a maximal element in $\mathcal{G}$. Therefore, $N=L$, and $N$ is finitely generated. 

\ref{eq:hom-noetherian-module-3}$\implies$\ref{eq:hom-noetherian-module-1}: Assume \ref{eq:hom-noetherian-module-3} holds, let $T_1\leq T_2\leq\dots$ be an ascending chain of hom-submodules of $M$, and $T=\cup_{i=1}^\infty T_i$. By \autoref{prop:hom-submodules-union}, $T$ is a hom-submodule of $M$, and hence it is finitely generated by some set $S$ which by \autoref{def:generating-set} is contained in $T$. Moreover, since $S$ is finite, it needs to be contained in $T_j$ for some $j\in\mathbb{N}_{>0}$. However, $T_j=T$ by \autoref{re:generating-set}, so $T_k=T_j$ for all $k\geq j$, and hence the ascending chain condition holds.
\end{proof}

\begin{definition}[Hom-Noetherian module]A right (left) $R$-hom-module is called \emph{hom-Noetherian} if it satisfies the three equivalent conditions of \autoref{prop:hom-noetherian-conditions-for-modules} on its hom-submodules.
\end{definition}

Appealing to \autoref{re:ring-as-module}, all properties that hold for right (left) hom-modules necessarily also hold for hom-associative rings, replacing ``hom-submodule" by ``right (left) hom-ideal''. Hence we have the following:

\begin{corollary}[The hom-Noetherian conditions for hom-associative rings]\label{prop:hom-noetherian-conditions-for-rings} Let $R$ be a non-unital, hom-associative ring. Then the following conditions are equivalent:
\begin{enumerate}[label=(NR\arabic*)]
	\item\label{eq:hom-noetherian-ring-1} $R$ satisfies the ascending chain condition on its right (left) hom-ideals.
	\item\label{eq:hom-noetherian-ring-2} Any nonempty family of right (left) hom-ideals of $R$ has a maximal element.
	\item\label{eq:hom-noetherian-ring-3} Any right (left) hom-ideal of $R$ is finitely generated.
\end{enumerate}
\end{corollary}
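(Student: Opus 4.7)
The plan is to deduce this corollary immediately from \autoref{prop:hom-noetherian-conditions-for-modules} by exploiting \autoref{re:ring-as-module}, which supplies $R$ as both a right hom-module $R_R$ and a left hom-module ${}_RR$ over itself. The only substantive check is that the notion of hom-submodule, when specialised to $R$ acting on itself, coincides with the notion of a right (left) hom-ideal; once this is in hand, the three conditions in the corollary are literally the three conditions of \autoref{prop:hom-noetherian-conditions-for-modules} read through this identification.

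Concretely, first I would unpack the definitions: a hom-submodule of $R_R$ is an additive subgroup $I \subseteq R$ that is closed under the scalar action $\cdot\colon R \times R \to R$ (which is just the ring multiplication) and satisfies $\alpha_R(I) \subseteq I$. Closure under right multiplication by arbitrary elements of $R$ together with being an additive subgroup is precisely the definition of a right ideal of $R$, and the extra invariance $\alpha_R(I) \subseteq I$ promotes this to a right hom-ideal. The argument on the left is symmetric. Hence the lattice of hom-submodules of $R_R$ (respectively ${}_RR$) is identical to the lattice of right (respectively left) hom-ideals of $R$.

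Given this bijection of lattices, each of the three conditions transfers verbatim. The ascending chain condition on hom-submodules of $R_R$ becomes \ref{eq:hom-noetherian-ring-1}; the existence of maximal elements in every nonempty family becomes \ref{eq:hom-noetherian-ring-2}; and finite generation of every hom-submodule (in the sense of \autoref{def:generating-set}, i.e.\ as the smallest hom-submodule containing a finite set) becomes \ref{eq:hom-noetherian-ring-3}. Applying \autoref{prop:hom-noetherian-conditions-for-modules} to $M = R_R$ yields the equivalence \ref{eq:hom-noetherian-ring-1} $\Leftrightarrow$ \ref{eq:hom-noetherian-ring-2} $\Leftrightarrow$ \ref{eq:hom-noetherian-ring-3} on the right; applying it to $M = {}_RR$ gives the left-handed version.

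There is essentially no obstacle here, since the corollary is, by design, a direct rephrasing. The only minor subtlety worth flagging is notational: the generating set of a hom-submodule in \autoref{def:generating-set} is defined via the intersection of containing hom-submodules, so finite generation of a right hom-ideal must be understood in that same sense, namely as the smallest right hom-ideal of $R$ containing a given finite subset. With that convention in place, the proof reduces to a one-line appeal to \autoref{prop:hom-noetherian-conditions-for-modules} via \autoref{re:ring-as-module}.
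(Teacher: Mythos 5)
Your proposal is correct and follows exactly the route the paper takes: the corollary is obtained by applying \autoref{prop:hom-noetherian-conditions-for-modules} to $R$ viewed as a right (left) hom-module over itself via \autoref{re:ring-as-module}, identifying hom-submodules of $R_R$ (resp.\ ${}_RR$) with right (resp.\ left) hom-ideals. Your remark that finite generation must be read in the hom-ideal sense of \autoref{def:generating-set} is a sensible clarification but introduces nothing beyond what the paper's one-line justification already contains.
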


\begin{definition}[Hom-Noetherian ring]A non-unital, hom-associative ring $R$ is called \emph{right (left) hom-Noetherian} if it satisfies the three equivalent conditions of \autoref{prop:hom-noetherian-conditions-for-rings} on its right (left) hom-ideals. If $R$ satisfies the conditions on both its right and its left hom-ideals, it is called \emph{hom-Noetherian}.
\end{definition}

\begin{remark}\label{re:noetherian-ring} If the twisting map is either the identity map or the zero map, a right (left) hom-Noetherian ring is simply a right (left) Noetherian ring (cf. \autoref{re:ideal}). If $R$ is a unital, hom-associative ring, then all right (left) ideals of $R$ are actually right (left) hom-ideals; if $I$ is a right ideal of $R$ and $i\in I$, then $\alpha(i)=\alpha(i)\cdot(1\cdot 1)=i\cdot\alpha(1)\in I$, and similarly for the left case. In particular, $R$ is right (left) hom-Noetherian if and only if $R$ is right (left) Noetherian.
\end{remark}

\begin{proposition}[Surjective hom-Noetherian hom-module morphism]\label{prop:surjective-hom-noetherian-morphism}The hom-Noetherian conditions are preserved by surjective morphisms of right (left) $R$-hom-modules.
\end{proposition}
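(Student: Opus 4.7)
The plan is to work with the ascending chain condition characterization \ref{eq:hom-noetherian-module-1} from \autoref{prop:hom-noetherian-conditions-for-modules}, since it pulls back cleanly along morphisms. Suppose $f\colon M\to M'$ is a surjective morphism of right (left) $R$-hom-modules with $M$ hom-noetherian, and let $N'_1\leq N'_2\leq\dots$ be an arbitrary ascending chain of hom-submodules of $M'$. By \autoref{prop:image-pre-image-hom-morphism}, each preimage $f^{-1}(N'_i)$ is a hom-submodule of $M$, and monotonicity of $f^{-1}$ gives the ascending chain $f^{-1}(N'_1)\leq f^{-1}(N'_2)\leq\dots$ in $M$. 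Since $M$ is hom-noetherian, this chain stabilizes: there exists $j\in\mathbb{N}_{>0}$ with $f^{-1}(N'_j)=f^{-1}(N'_k)$ for all $k\geq j$.

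To transfer the stabilization back to $M'$, I would use surjectivity: for any hom-submodule $N'$ of $M'$ one has $f(f^{-1}(N'))=N'$, because the inclusion $f(f^{-1}(N'))\subseteq N'$ is automatic and any $n'\in N'$ lifts to some $m\in M$ with $f(m)=n'\in N'$, so $m\in f^{-1}(N')$ and $n'\in f(f^{-1}(N'))$. Applying $f$ to the stabilized preimage chain therefore yields $N'_j=N'_k$ for all $k\geq j$, establishing \ref{eq:hom-noetherian-module-1} for $M'$; by the equivalence of the three conditions, $M'$ is hom-noetherian.

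There is really no main obstacle here — the compatibility of $f$ with both the scalar action and the twisting map, which is the one thing that could go wrong in the hom-setting, has already been absorbed into \autoref{prop:image-pre-image-hom-morphism}, so the argument reduces to the usual pullback-of-chains trick. I would deliberately avoid trying to prove this via the finitely generated characterization \ref{eq:hom-noetherian-module-3}, since pushing a finite generating set of $f^{-1}(N')$ forward through $f$ would require an additional argument that $f$ sends the hom-submodule generated by a set $S$ onto the hom-submodule generated by $f(S)$, which is less immediate than the chain argument used above.
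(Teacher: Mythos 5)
Your proof is correct and takes essentially the same route as the paper's: both pull the relevant data back along $f^{-1}$, invoke \autoref{prop:image-pre-image-hom-morphism} to see that preimages are hom-submodules, use the hom-noetherianness of $M$, and then transfer the conclusion to $M'$ via surjectivity (your identity $f(f^{-1}(N'))=N'$ is exactly what makes the paper's strict inclusion $f^{-1}(N'_0)<f^{-1}(N')$ work as well). The only difference is that you verify condition \ref{eq:hom-noetherian-module-1} on ascending chains while the paper verifies \ref{eq:hom-noetherian-module-2} on maximal elements, which is immaterial given \autoref{prop:hom-noetherian-conditions-for-modules}.
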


\begin{proof}It is sufficient to prove this for any of the three equivalent conditions \ref{eq:hom-noetherian-module-1}, \ref{eq:hom-noetherian-module-2}, or \ref{eq:hom-noetherian-module-3} in \autoref{prop:hom-noetherian-conditions-for-modules}, so let us choose \ref{eq:hom-noetherian-module-2}. To this end, let $f\colon M\to M'$ be a surjective morphism of right (left) $R$-hom-modules where $M$ is hom-Noetherian. Let $\mathcal{F}'$ be a nonempty family of right (left) hom-submodules of $M'$. Now, consider the corresponding family in $M$, $\mathcal{F}=\{f^{-1}(N')\colon N'\in \mathcal{F}'\}$. By the surjectivity of $f$, this family is nonempty, and since $M$ is Noetherian, it has a maximal element $f^{-1}(N'_0)$ for some $N'_0\in\mathcal{F}'$. We would like to show that $N'_0$ is a maximal element of $\mathcal{F}'$. Assume there exists an element $N'\in \mathcal{F}'$ such that $N'_0<N'$. We know that the operation of taking preimages under any function preserves inclusions sets. We also know that the preimage of any hom-submodule is again a hom-submodule by \autoref{prop:image-pre-image-hom-morphism}, so taking preimages under a hom-morphism preserves the inclusions on the hom-submodules, and therefore $N'_0<N'$ implies that $f^{-1}(N'_0)<f^{-1}(N')$, which contradicts the maximality of $f^{-1}(N'_0)$ in $\mathcal{F}$. Hence $N'_0$ is a maximal element of $\mathcal{F}'$, and $M'$ is hom-Noetherian.
\end{proof}

\begin{proposition}[Hom-Noetherian condition on quotient hom-module]\label{prop:quotient-hom-noetherian} Let $M$ be a right (left) $R$-hom-module, and $N\leq M$. Then $M$ is hom-Noetherian if and only if $M/N$ and $N$ are hom-Noetherian.
\end{proposition}

\begin{proof}This is again an adaptation of a proof that can be found in \cite{GW04} to the hom-associative setting.

$(\Longrightarrow)\colon$ Assume $M$ is hom-Noetherian and $N\leq M$. Then any hom-submodule of $N$ is also a hom-submodule of $M$, and hence it is finitely generated, and $N$ therefore also hom-Noetherian. If $L_1\leq L_2\leq\dots$ is an ascending chain of hom-submodules of $M/N$, then from \autoref{cor:quotient-submodule}, each $L_i=M_i/N$ for some $M_i$ with $N\leq M_i\leq M$. Furthermore, $M_1\leq M_2\leq\dots$, but since $M$ is hom-Noetherian, there is some $n$ such that $M_i=M_n$ for all $i\geq n$. Then $L_i=M_n/N=L_n$ for all $i\geq n$, so $M/N$ is hom-Noetherian.

$(\Longleftarrow)\colon$ Assume $M/N$ and $N$ are hom-Noetherian. Let $S_1\leq S_2\leq\dots$ be an ascending chain of hom-submodules of $M$. By \autoref{prop:hom-submodule-intersection}, $S_i\cap N$ is a hom-submodule of $N$ for every $i\in \mathbb{N}_{>0}$, and furthermore $S_i\cap N\leq S_{i+1}\cap N$. We thus have an ascending chain $S_1\cap N\leq S_2\cap N\leq\dots$ of hom-submodules of $N$. By \autoref{prop:hom-submodule-sum}, $S_i+N$ is a hom-submodule of $M$, and moreover, $N=0+N$ is a hom-submodule of $S_i+N$, so we can consider $\left(S_i+N\right)/N$. Now, $\left(S_i+N\right)/N \leq \left(S_{i+1}+N\right)/N$ by \autoref{cor:quotient-submodule}, so we have an ascending chain $(S_1+N)/N\leq (S_2+N)/N\leq\dots$ of hom-submodules of $M/N$. Since both $N$ and $M/N$ are hom-Noetherian, there is some $k$ such that $S_j\cap N=S_k\cap N$ and $(S_j+N)/N=(S_k+N)/N$ for all $j\geq k$. The latter equation implies that for any $s_j\in S_j$ and $n\in N$, there are $s_k\in S_k$ and $n'\in N$ such that $(s_j+n)+N=(s_k+n')+N$. Hence $x:=((s_j+n)-(s_k+n'))\in N$, and therefore $s_j+n=(s_k+(x+n'))\in (S_k+N)$, so that $(S_j+N)\leq (S_k+N)$, and by a similar argument, $(S_k+N)\leq (S_j+N)$, so $S_j+N=S_k+N$ for all $j\geq k$. Using this and the modular law for hom-modules (\autoref{cor:modular-law}), $S_k=(S_k\cap N)+S_k=(S_j\cap N)+S_k=S_j\cap(N+S_k)=S_j\cap (S_k+N)=S_j\cap (S_j+N)=S_j$ for all $j\geq k$, and hence $M$ is hom-Noetherian.
\end{proof}

\begin{corollary}[Finite direct sum of hom-Noetherian modules] Any finite direct sum of hom-Noetherian modules is hom-Noetherian.
\end{corollary}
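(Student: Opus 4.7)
The natural plan is induction on the number $k$ of summands, reducing everything to the two-summand case where the previous proposition on quotient hom-modules (\autoref{prop:quotient-hom-notherian}) applies directly. The base case $k=1$ is vacuous. For $k \geq 2$, using the associativity of the direct sum (\autoref{cor:isomorphic-direct-sum}), one can write $\bigoplus_{i=1}^k M_i \cong M_1 \oplus \left(\bigoplus_{i=2}^k M_i\right)$, so by induction it suffices to prove the two-summand case: if $M_1$ and $M_2$ are hom-noetherian right (left) $R$-hom-modules, then $M:=M_1\oplus M_2$ is hom-noetherian.

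For the two-summand case, I would identify $M_1$ with the subset $N:=M_1\oplus 0 \subseteq M$. First I would verify that $N$ is a hom-submodule of $M$: it is clearly an additive subgroup, closed under the componentwise scalar multiplication $\bullet$ from \autoref{prop:direct-sum}, and invariant under $\alpha_M$ since $\alpha_M((m_1,0)) = (\alpha_{M_1}(m_1), \alpha_{M_2}(0)) = (\alpha_{M_1}(m_1),0) \in N$. Moreover, the projection onto the first coordinate gives a hom-module isomorphism $N \cong M_1$, so $N$ is hom-noetherian.

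Next I would exhibit a surjective morphism of hom-modules $\pi_2 \colon M \to M_2$ via $(m_1,m_2)\mapsto m_2$, which is straightforwardly additive, commutes with the scalar multiplication, and satisfies $\pi_2\circ\alpha_M = \alpha_{M_2}\circ\pi_2$. Its kernel is exactly $N$, so by the first isomorphism theorem (\autoref{thm:first-isomorphism-thm}), $M/N \cong M_2$, which is hom-noetherian by assumption. Since both $N$ and $M/N$ are hom-noetherian, \autoref{prop:quotient-hom-notherian} yields that $M$ is hom-noetherian.

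I do not anticipate any real obstacle here: the only substantive content is the previous proposition, and the remaining work is bookkeeping to confirm that the componentwise structure on $M_1\oplus M_2$ makes the inclusion $M_1\hookrightarrow M_1\oplus M_2$ and the projection $M_1\oplus M_2\twoheadrightarrow M_2$ into hom-module morphisms with the expected image, kernel, and quotient. The mildest subtlety is simply remembering to invoke \autoref{cor:isomorphic-direct-sum} so that the inductive step from $k$ to $k+1$ is a legitimate application of the two-summand case rather than an implicit re-parenthesization.
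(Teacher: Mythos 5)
Your proposal is correct and follows essentially the same route as the paper: induction via the associativity of the direct sum, with the two-summand case handled by identifying $M_1$ with the hom-submodule $M_1\oplus 0$, applying the first isomorphism theorem to the projection onto $M_2$, and concluding with \autoref{prop:quotient-hom-notherian}. The only cosmetic difference is that the paper peels off the last summand in the induction step rather than the first, and it explicitly cites \autoref{prop:surjective-hom-noetherian-morphism} to transfer hom-noetherianity along the isomorphisms, a step you leave implicit.
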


\begin{proof} We prove this by induction. 

Base case ($\mathsf{P}(2)$): Let $M_1$ and $M_2$ be two hom-Noetherian modules and consider the direct sum $M=M_1\oplus M_2$, which is a right (left) $R$-hom-module by \autoref{prop:direct-sum}. Moreover, $M_1\cong M_1\oplus0$ as additive groups, and for any $r\in R$, $f((m_1,0)\bullet r)=f((m_1\cdot r, 0\cdot r))=f((m_1\cdot r,0))=m_1\cdot r=f((m_1,0))\cdot r$. Now, $f(\alpha_{M_1\oplus 0}((m_1,0)))=f((\alpha_{M_1}(m_1),0))=\alpha_{M_1}(m_1)=\alpha_{M_1}(f(m_1,0))$, so as right (left) $R$-hom-modules, $M_1\cong M_1\oplus 0\leq M$. Similarly, the projection $g\colon M\to M_2$ is a surjective morphism of right (left) $R$-hom-modules with $\ker g=M_1\oplus 0$, so by \autoref{thm:first-isomorphism-thm}, $M/(M_1\oplus 0)\cong M_2$. Due to \autoref{prop:surjective-hom-noetherian-morphism}, $M_1\oplus 0$ and $M/(M_1\oplus 0)$ are both hom-Noetherian, and so by \autoref{prop:quotient-hom-noetherian}, $M$ is hom-Noetherian.

Induction step ($\forall k\in\mathbb{N}_{>1}\ (\mathsf{P}(k)\rightarrow\mathsf{P}(k+1))$): Assume $M'=\bigoplus_{i=1}^k M_i$ is hom-Noetherian for $2\leq k$, where each $M_i$ is a hom-Noetherian right (left) $R$-hom-module. Let $M_{k+1}$ be a hom-Noetherian right (left) $R$-hom-module. Then $\bigoplus_{i=1}^{k+1}M_i\cong M'\oplus M_{k+1}$ by \autoref{cor:isomorphic-direct-sum}.  The latter of the two is hom-Noetherian by the base case, and by \autoref{prop:surjective-hom-noetherian-morphism} the former as well. 
\end{proof}

\section{Hilbert's basis theorem for hom-associative Ore extensions}\label{sec:hilbert-theorem}
In this section, we consider unital, non-associative Ore extensions $R[X;\sigma,\delta]$ over some unital, non-associative ring $R$. First, recall from \autoref{prop:extended-ore} that if $R$ is hom-associative, then a sufficient condition for $R[X;\sigma,\delta]$ to be hom-associative is that $\sigma$ is a unital endomorphism and $\delta$ a $\sigma$-derivation that both commute with the twisting map $\alpha$ of $R$, the latter extended homogeneously to $R[X;\sigma,\delta]$. Moreover, from \autoref{re:noetherian-ring}, for unital, hom-associative rings, being right (left) hom-Noetherian is the same as being right (left) Noetherian. Also recall that the \emph{associator} is the map $(\cdot,\cdot,\cdot)\colon R\times R\times R\to R$ defined by $(r,s,t)=(r\cdot s)\cdot t-r\cdot (s\cdot t)$ for any $r,s,t\in R$. The \emph{left}, \emph{middle}, and \emph{right nucleus} of $R$ are denoted by $N_l(R)$, $N_m(R)$, and $N_r(R)$, respectively. As sets, they are defined as $N_l(R):=\{r\in R\colon (r,s,t)=0,\ s,t\in R\}$, $N_m(R):=\{s\in R\colon (r,s,t)=0,\ r,t\in R\}$, and $N_r(R):=\{t\in R\colon (r,s,t)=0,\ r,s\in R\}$. The \emph{nucleus} of $R$, written $N(R)$, is defined as the set $N(R):=N_l(R)\cap N_m(R)\cap N_r(R)$. By the \emph{associator identity} $u\cdot(r,s,t)+(u,r,s)\cdot t+(u,r\cdot s,t)=(u\cdot r,s,t)+(u,r,s\cdot t)$, holding for all $r,s,t,u\in R$, $N_r(R), N_m(R), N_r(R)$, and hence also $N(R)$, are all associative subrings of $R$.

\begin{proposition}[Associator of $X^k$]\label{prop:X-nucleus}Let $R[X;\sigma,\delta]$ be a unital, non-associative Ore extension of a unital, non-associative ring $R$. Assume $\sigma$ is a unital endomorphism and $\delta$ a $\sigma$-derivation on $R$. Then $X^k\in N(R[X;\sigma,\delta])$ for any $k\in\mathbb{N}$.
\end{proposition}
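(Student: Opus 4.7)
Plan: The goal is to show that all three associators involving $X^k$ vanish, i.e.\ $X^k\in N_l\cap N_m\cap N_r$. By bi-additivity of the associator in each slot (which follows from distributivity in $R[X;\sigma,\delta]$), it suffices to check vanishing on the monomial triples $(X^k,aX^m,bX^n)$, $(aX^m,X^k,bX^n)$, and $(aX^m,bX^n,X^k)$ for arbitrary $a,b\in R$ and $m,n\in\mathbb{N}$.

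A preliminary observation I would use throughout is that $\pi_i^m(1)=\delta_{im}$ (Kronecker delta): any composition contributing to $\pi_i^m$ with $i<m$ contains at least one application of $\delta$, which annihilates $1$ by \autoref{re:sigma-derivation}, while the lone all-$\sigma$ composition gives $\sigma^m(1)=1$ since $\sigma$ is unital. This immediately yields $aX^m\cdot X^k=aX^{m+k}$; in other words, right-multiplication by a power of $X$ acts as a pure exponent shift.

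From here the right associator is trivial, since both $(aX^m\cdot bX^n)\cdot X^k$ and $aX^m\cdot(bX^n\cdot X^k)=aX^m\cdot bX^{n+k}$ expand to $\sum_i(a\cdot\pi_i^m(b))X^{i+n+k}$. For the left associator I would expand both sides, collect by the total $X$-degree $L$, and read off the equality of coefficients directly from \autoref{lem:lem-of-pi}\ref{eq:nystedt-sum} (with the lemma's $m,n,l$ taken to be $k,m,L$).

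The middle associator is the only step requiring a new ingredient. Using the shift property on $(aX^m\cdot X^k)\cdot bX^n=aX^{m+k}\cdot bX^n$ and expanding $aX^m\cdot(X^k\cdot bX^n)$ via bi-additivity, both products reduce to sums of the form $\sum_L(\mathrm{coeff}_L)\,X^{L+n}$, and equality boils down to the convolution identity $\pi_L^{m+k}(b)=\sum_i\pi_i^m(\pi_{L-i}^k(b))$. I would obtain this identity by specializing $a:=1$ in \autoref{lem:lem-of-pi}\ref{eq:nystedt-sum}: the right-hand side of the lemma then collapses to a single term because $\pi_i^m(1)=\delta_{im}$, while left-unitality of $R$ simplifies the left-hand side. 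The main (in fact only) obstacle is thus this middle case, but the required combinatorial identity is effectively hidden inside \autoref{lem:lem-of-pi}, so no separate induction or fresh bookkeeping is needed. Combining the three cases, $X^k\in N(R[X;\sigma,\delta])$.
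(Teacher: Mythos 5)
Your proof is correct, but it takes a genuinely different route from the paper's. The paper only verifies the three nucleus conditions for $X$ itself (i.e.\ $k=1$), using \autoref{lem:lem-of-pi}\ref{eq:pi-function-sum-split} together with the $\sigma$-derivation Leibniz rule for the middle and left cases, and then gets all powers $X^k$ for free from the fact --- established just before the proposition via the associator identity --- that $N(R[X;\sigma,\delta])$ is a subring. You instead handle general $k$ head-on and make \autoref{lem:lem-of-pi}\ref{eq:nystedt-sum} do all the work: for the left associator it gives the needed coefficient identity verbatim, and for the middle associator your specialization $a:=1$ (collapsing the right-hand side via $\pi_i^m(1)=\delta_{im}$) is a clean way to extract the convolution identity $\pi_L^{m+k}=\sum_i\pi_i^m\circ\pi_{L-i}^k$, which is exactly what the comparison of $aX^{m+k}\cdot bX^n$ with $aX^m\cdot(X^k\cdot bX^n)$ requires. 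What your approach buys is that you never need the subring property of the nucleus, and you avoid part \ref{eq:pi-function-sum-split} of the lemma and the Leibniz rule entirely; the cost is that you lean entirely on part \ref{eq:nystedt-sum}, whose proof the paper outsources to \cite{Nys13}, whereas the paper's own verification for $X$ is self-contained modulo the elementary splitting identity. Both arguments are complete; yours is arguably the shorter path given that \autoref{lem:lem-of-pi}\ref{eq:nystedt-sum} is already on record.
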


\begin{proof}By identifying $X^0$ with $1\in R$, $X^0\in N(R[X;\sigma,\delta])$. We now wish to show that $X\in N(R[X;\sigma,\delta])$. In order to do so, we must show that $X$ associates with all polynomials in $R[X;\sigma,\delta]$. Due to distributivity, it is however sufficient to prove that $X$ associates with arbitrary monomials $aX^m$ and $bX^n$ in $R[X;\sigma,\delta]$. To this end, first note that $aX^m\cdot X=\sum_{i\in\mathbb{N}} \left(a\cdot\pi_i^m(1)\right)X^{i+1}=aX^{m+1}$ since $\sigma$ is unital by assumption, and $\delta(1)=0$ by \autoref{re:sigma-derivation}. Then,
\allowdisplaybreaks
\begin{align*}
&\left(aX^m\cdot bX^n\right)\cdot X=\left(\sum_{i\in\mathbb{N}}\left(a\cdot\pi_i^m(b)\right)X^{i+n}\right)\cdot X=\sum_{i\in\mathbb{N}}\left(\left(a\cdot\pi_i^m(b)\right)X^{i+n}\right)\cdot X\\
&=\sum_{i\in\mathbb{N}}\left(a\cdot\pi_i^m(b)\right)X^{i+n+1}=aX^m\cdot bX^{n+1}= aX^m\cdot \left(bX^n\cdot X\right),
\end{align*}
so $X\in N_r(R[X;\sigma,\delta])$. Also, by using \ref{eq:pi-function-sum-split} in \autoref{lem:lem-of-pi},
\begin{align*}
&\left(aX^m\cdot X\right)\cdot bX^n=aX^{m+1}\cdot bX^n=\sum_{i\in\mathbb{N}}\left(a\cdot\pi_i^{m+1}(b)\right)X^{i+n}\\
&=\sum_{i\in\mathbb{N}}\left(a\cdot\left(\pi_{i-1}^m\circ\sigma(b)+\pi_i^m\circ\delta(b)\right)\right)X^{i+n}\\
&=\sum_{j\in\mathbb{N}}\left(a\cdot\pi_j^m(\sigma(b))\right)X^{j+n+1}+\sum_{i\in\mathbb{N}}\left(a\cdot\pi_i^m(\delta(b))\right)X^{i+n}\\
&=aX^m\cdot\sigma(b)X^{n+1}+aX^m\cdot\delta(b)X^n=aX^m\cdot\left(\sigma(b)X^{n+1}+\delta(b)X^n\right)\\
&=aX^m\cdot\sum_{i\in\mathbb{N}}\left(1\cdot\pi_i^1(b)\right)X^{n+i}=aX^m\cdot\left(X\cdot bX^n\right),
\end{align*}  
so $X \in N_m(R[X;\sigma,\delta])$. By a similar calculation, $X\in N_l(R[X;\sigma,\delta])$, so $X\in N(R[X;\sigma,\delta])$. Since $N(R[X;\sigma,\delta])$ is a ring it also contains all powers of $X$, so $X^k\in N(R[X;\sigma,\delta])$ for any $k\in\mathbb{N}$. \qedhere

%\begin{align*}
%&\left(X\cdot aX^m\right)\cdot bX^n=\left(\sum_{i\in\mathbb{N}}\left(1\cdot\pi_i^1(a)\right)X^{i+m}\right)\cdot bX^n=\left(\delta(a)X^m+\sigma(a)X^{m+1}\right)\cdot bX^n\\
%&=\delta(a)X^m\cdot bX^n+\sigma(a)X^{m+1}\cdot bX^n\\
%&=\sum_{i\in\mathbb{N}}\left(\delta(a)\cdot\pi_{i}^m(b)\right) X^{i+n}+\sum_{j\in\mathbb{N}}\left(\sigma(a)\cdot\pi_j^{m+1}(b)\right)X^{j+n}\\
%&=\sum_{i\in\mathbb{N}}\left(\delta(a)\cdot\pi_{i}^m(b)\right) X^{i+n}+\sum_{j\in\mathbb{N}}\left(\sigma(a)\cdot\left(\sigma\circ\pi_{j-1}^m(b)+\delta\circ\pi_j^m(b)\right)\right)X^{j+n}\\
%&=\sum_{i\in\mathbb{N}}\left(\sigma(a)\cdot\delta\left(\pi_i^m(b)\right)+\delta(a)\cdot\pi_i^m(b)\right)X^{i+n}+\sum_{k\in\mathbb{N}}\left(\sigma(a)\cdot\sigma\left(\pi_k^m(b)\right)\right)X^{k+n+1}\\
%&=\sum_{i\in\mathbb{N}}\delta\left(a\cdot\pi_i^m(b)\right)X^{i+n}+\sum_{k\in\mathbb{N}}\sigma\left(a\cdot\pi_k^m(b)\right)X^{k+n+1}=\sum_{i\in\mathbb{N}}X\cdot\left(a\cdot\pi_i^m(b)\right)X^{i+n}\\
%&=X\cdot\sum_{i\in\mathbb{N}}\left(a\cdot\pi_i^m(b)\right)X^{i+n}=X\cdot\left(aX^m\cdot bX^n\right),
%\end{align*}
\end{proof}

\begin{proposition}[Hom-modules of {$R[X;\sigma,\delta]$}]\label{prop:ore-module}Let $R$ be a unital, Noetherian, hom-associative ring with twisting map $\alpha$, $\sigma$ a unital endomorphism and $\delta$ a $\sigma$-derivation that both commute with $\alpha$. Extend $\alpha$ homogeneously to $R[X;\sigma,\delta]$. Then, for any $m\in\mathbb{N}$, $\sum_{i=0}^m X^iR$ ($\sum_{i=0}^m RX^i$) is a hom-Noetherian right (left) $R$-hom-module.
\end{proposition}

\begin{proof} Let us prove the right case; the left case is similar, but slightly simpler. Put $M=\sum_{i=0}^m X^iR$. First note that $M$ really is a subset of $R[X;\sigma,\delta]$, where the elements are of the form $\sum_{i=0}^m 1 X^i\cdot r_i X^0$ with $r_i\in R$. When identifying $1X^i$ with $X^i$ and $r_i$ with $r_iX^0$, this gives us elements of the form $\sum_{i=0}^m X^i\cdot r_i$. Using this identification also allows us to write the multiplication in $R$, which in \autoref{def:hom-module} is done by juxtaposition, by ``$\cdot$'' instead. The purpose of this is do be consistent with our previous notation.

Since distributivity follows from that in $R[X;\sigma,\delta]$, it suffices to show that the multiplication in $R[X;\sigma,\delta]$ is a scalar multiplication, and that we have twisting maps $\alpha_M$ and $\alpha_R$ that give us hom-associativity. To this end, for any $r\in R$ and any element in $M$ (which is of the form described above), by using \autoref{prop:X-nucleus},
\begin{equation}
\left(\sum_{i=0}^m X^i\cdot r_i\right)\cdot r=\sum_{i=0}^m\left(X^i\cdot r_i\right)\cdot r=\sum_{i=0}^m X^i\cdot (r_i\cdot r),\label{eq:ore-module-mult}
\end{equation}
and the latter is clearly an element of $M$. Now, we claim that $M$ is invariant under the homogeneously extended twisting map on $R[X;\sigma,\delta]$. To follow the notation in \autoref{def:hom-module}, let us denote this map when restricted to $M$ by $\alpha_M$, and that of $R$ by $\alpha_R$. Then, by using the additivity of $\alpha_M$ and $\alpha_R$, as well as the fact that the latter commutes with $\delta$ and $\sigma$, we get
\begin{align}
&\alpha_M\left(\sum_{i=0}^mX^i\cdot r_i\right)=\alpha_M\left(\sum_{i=0}^m\sum_{j\in\mathbb{N}}\pi_j^i(r_i)X^j\right)=\sum_{i=0}^m\sum_{j\in\mathbb{N}}\alpha_M\left(\pi_j^i(r_i)X^j\right)\nonumber\\
&=\sum_{i=0}^m\sum_{j\in\mathbb{N}}\alpha_R\left(\pi_j^i(r_i)\right)X^j=\sum_{i=0}^m\sum_{j\in\mathbb{N}}\pi_j^i(\alpha_R(r_i))X^j=\sum_{i=0}^mX^i\cdot \alpha_R(r_i),\label{eq:right-homogeneous-alpha}
\end{align}
which again is an element of $M$. At last, let $r,s\in R$ be arbitrary. Then,
\begin{align*}
&\alpha_M\left(\sum_{i=0}^mX^i\cdot r_i\right)\cdot (r\cdot s)\stackrel{\eqref{eq:right-homogeneous-alpha}}{=}\left(\sum_{i=0}^mX^i\cdot \alpha_R(r_i)\right)\cdot(r\cdot s)\stackrel{\eqref{eq:ore-module-mult}}{=}\sum_{i=0}^mX^i\cdot \left(\alpha_R(r_i)\cdot(r\cdot s)\right)\\
&=\sum_{i=0}^mX^i\cdot \left((r_i\cdot r)\cdot \alpha_R(s)\right)\stackrel{\eqref{eq:ore-module-mult}}{=}\left(\sum_{i=0}^mX^i\cdot (r_i\cdot r)\right)\cdot \alpha_R(s)\\
&\stackrel{\eqref{eq:ore-module-mult}}{=}\left(\left(\sum_{i=0}^mX^i\cdot r_i\right)\cdot r\right)\cdot \alpha_R(s),
\end{align*}
which proves hom-associativity. What is left to prove is that $M$ is hom-Noetherian. Now, let us define $f\colon\bigoplus_{i=0}^m R\to M$ by $(r_0,r_1,\dots,r_m)\mapsto\sum_{i=0}^m X^i\cdot r_i$ for any $(r_0,r_1,\dots,r_m)\in \bigoplus_{i=0}^mR$. We see that $f$ is additive, and for any $r\in R$, we have $f((r_0,r_1,\dots,r_m)\bullet r)=f((r_0,r_1,\dots,r_m))\cdot r$. A similar argument gives $f(\alpha_{\bigoplus_{i=0}^mR}((r_0,r_1,\dots,r_m)))=\alpha_M(f((r_0,r_1,\dots,r_m)))$, which shows that $f$ is a morphism of two right $R$-hom-modules. Moreover, $f$ is surjective, and so by \autoref{prop:surjective-hom-noetherian-morphism}, $M$ is hom-Noetherian.
\end{proof}

\begin{lemma}[Properties of {$R[X;\sigma,\delta]^\text{op}$}]\label{lem:opposite-hom-noetherian}Let $R$ be a unital, hom-associative ring with twisting map $\alpha$, $\sigma$ an automorphism and $\delta$ a $\sigma$-derivation that both commute with $\alpha$. Extend $\alpha$ homogeneously to $R[X;\sigma,\delta]$. Then the following hold:
\begin{enumerate}[label=(\roman*)]
	\item\label{le:Rop-automorphism} $\sigma^{-1}$ is an automorphism on $R^{\text{op}}$ that commutes with $\alpha$.
	\item\label{le:Rop-derivation} $-\delta\circ\sigma^{-1}$ is a $\sigma^{-1}$-derivation on $R^{\text{op}}$ that commutes with $\alpha$.
	\item\label{le:Rop-equality} $R[X;\sigma,\delta]^\text{op}\cong R^{\text{op}}[X;\sigma^{-1},-\delta\circ\sigma^{-1}]$.
\end{enumerate} 
\end{lemma}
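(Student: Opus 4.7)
Parts (i) and (ii) are straightforward verifications. For (i), $\sigma^{-1}$ is a bijective additive map with $\sigma^{-1}(1)=1$, and applying $\sigma$ to both sides of $\sigma^{-1}(x)\sigma^{-1}(y)=\sigma^{-1}(xy)$ confirms its multiplicativity on $R$; read in $R^{\text{op}}$, this is exactly the endomorphism property of $\sigma^{-1}$. Commutation with $\alpha$ follows by conjugating $\sigma\alpha=\alpha\sigma$ by $\sigma^{-1}$. For (ii), I would unfold $(-\delta\circ\sigma^{-1})(a\cdot_{op}b)=-\delta(\sigma^{-1}(b)\sigma^{-1}(a))$ via the $\sigma$-derivation Leibniz rule for $\delta$ and regroup the resulting two terms as the $\sigma^{-1}$-derivation identity for $-\delta\circ\sigma^{-1}$ on $R^{\text{op}}$; commutation with $\alpha$ is immediate.

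For (iii), the guiding idea is that since $\sigma$ is an automorphism, every polynomial in $R[X;\sigma,\delta]$ admits a unique ``right-coefficient'' representation $\sum_i X^i\cdot b_i$, and reading such a representation over $R^{\text{op}}$ yields the Ore extension built with $\sigma^{-1}$ and $-\delta\circ\sigma^{-1}$. Concretely, I would define $\phi\colon R^{\text{op}}[X;\sigma^{-1},-\delta\circ\sigma^{-1}]\to R[X;\sigma,\delta]^{\text{op}}$ by $\phi(aX^m):=X^m\cdot a$, with the product computed in $R[X;\sigma,\delta]$, and extend additively. Since $X^m\cdot a=\sum_j\pi_j^m(a)X^j$, the map $\phi$ is given in the standard basis by a triangular array whose diagonal entries $\sigma^i$ are invertible, so $\phi$ is a bijection of underlying additive groups.

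The multiplicative verification rests on the ``reversal identity''
\[
b\cdot X^m=\sum_i X^i\cdot\tilde\pi_i^m(b)\qquad\text{in }R[X;\sigma,\delta],
\]
where $\tilde\pi$ denotes the $\pi$-function built from $\sigma^{-1}$ and $-\delta\circ\sigma^{-1}$. I would prove this by induction on $m$: the base case $b\cdot X=X\cdot\sigma^{-1}(b)-\delta(\sigma^{-1}(b))$ is obtained from $X\cdot c=\sigma(c)\cdot X+\delta(c)$ by substituting $c=\sigma^{-1}(b)$, and the step uses \autoref{lem:lem-of-pi}\ref{eq:pi-function-sum-split} (applied with $\sigma^{-1}$ and $-\delta\circ\sigma^{-1}$) together with \autoref{prop:X-nucleus} to freely reassociate powers of $X$. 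Granted the reversal identity, it suffices by bilinearity to verify the multiplicative property on monomials $p=aX^m$, $q=bX^n$: the left side $\phi(p\cdot q)$ unfolds directly to $\sum_i X^{i+n}\cdot(\tilde\pi_i^m(b)\cdot a)$, while the right side $\phi(p)\cdot_{op}\phi(q)=(X^n\cdot b)\cdot(X^m\cdot a)$, reassociated to $X^n\cdot(b\cdot X^m\cdot a)$ via \autoref{prop:X-nucleus}, yields the same sum by one application of the reversal identity. Compatibility with the twisting maps reduces to $\alpha$ commuting with every $\pi_j^i$, giving $\alpha(X^m\cdot a)=X^m\cdot\alpha(a)$ and hence $\phi\circ\alpha=\alpha\circ\phi$. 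The main obstacle I anticipate is the reversal identity together with the associator bookkeeping, since $R[X;\sigma,\delta]$ is only hom-associative in general; \autoref{prop:X-nucleus}, which places every power of $X$ in the nucleus, is indispensable because it legitimizes each reassociation involving such powers.
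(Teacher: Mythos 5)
Your proposal is correct and follows the same overall strategy as the paper: the identical map (sending $aX^m$ to $X^m\cdot a$ computed in $R[X;\sigma,\delta]$), bijectivity via the triangular change of coefficients with invertible diagonal entries $\sigma^i$ (the paper spells this out as an explicit induction on coefficients for injectivity and a top-down construction for surjectivity), the same treatment of parts \ref{le:Rop-automorphism} and \ref{le:Rop-derivation}, and the same reliance on \autoref{prop:X-nucleus} and \autoref{lem:lem-of-pi}\ref{eq:pi-function-sum-split} to control associators and recursions. The one genuine organizational difference is in the multiplicativity check: the paper runs a double induction over $(m,n)$ directly on the statement $f(p\bullet q)=f(p)\op f(q)$, whereas you factor out a single-variable ``reversal identity'' $b\cdot X^m=\sum_i X^i\cdot\tilde\pi_i^m(b)$ (proved by one induction on $m$ using the outermost form of \ref{eq:pi-function-sum-split} for the $\tilde\pi$ functions) and then verify multiplicativity on monomials in one computation. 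Your packaging is arguably cleaner, since the reversal identity isolates exactly the content that the paper's induction step over $m$ re-derives inline; both arguments need every reassociation to pass through powers of $X$, which \autoref{prop:X-nucleus} licenses in either case.
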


\begin{proof}That $\sigma^{-1}$ is an automorphism and $-\delta\circ\sigma^{-1}$ a $\sigma^{-1}$-derivation on $R^{\text{op}}$ is an exercise in \cite{GW04} that can be solved without any use of associativity. Now, since $\alpha$ commutes with $\delta$ and $\sigma$, for any $r\in R^{\text{op}}$, $\sigma(\alpha(\sigma^{-1}(r)))=\alpha(\sigma(\sigma^{-1}(r)))=\alpha(r)$, so by applying $\sigma^{-1}$ to both sides, $\alpha(\sigma^{-1}(r))=\sigma^{-1}(\alpha(r))$. From this, it follows that $-\delta(\sigma^{-1}(\alpha(r)))=-\delta(\alpha(\sigma^{-1}(r)))=\alpha(-\delta(\sigma^{-1}(r)))$, which proves the first and second statement.

For the third statement, let us start by putting $S:=R^{\text{op}}[X;\sigma^{-1},-\delta\circ\sigma^{-1}]$ and $S':=R[X;\sigma,\delta]^\text{op}$, and then define a map $f\colon S\to S'$ by $\sum_{i=0}^nr_iX^i\mapsto \sum_{i=0}^nr_i\op X^i$ for $n\in\mathbb{N}$. We claim that $f$ is an isomorphism of hom-associative rings. First, note that an arbitrary element of $S'$ by definition is of the form $p:=\sum_{i=0}^ma_iX^i$ for some $m\in\mathbb{N}$ and $a_i\in R^\text{op}$. Then,
\allowdisplaybreaks
\begin{align*}
&p=\underbrace{X^m\cdot\sigma^{-m}(a_m)+b_{m-1}X^{m-1}+\dots +b_0}_{=a_mX^m}+\dots+\underbrace{X\cdot\sigma^{-1}(a_1)+\delta(\sigma^{-1}(a_1))}_{=a_1X}+a_0\\
&=X^m\cdot\sigma^{-m}(a_m)+X^{m-1}\cdot a'_{m-1}+\dots+X\cdot a_1'+a_0'\\
&=\sigma^{-m}(a_m)\op X^m+ a'_{m-1}\op X^{m-1}+\dots+\cdot a_1'\op X+a_0'\in\im f,
\end{align*}
for some $a'_{m-1}, b_{m-1}, \dots,a'_0,b_0\in R^\text{op}$, so $f$ is surjective. The second last step also shows that $\sum_{i=0}^m RX^i\subseteq \sum_{i=0}^m X^iR$ as sets, and a similar calculation shows that $\sum_{i=0}^m X^iR\subseteq \sum_{i=0}^m RX^i$, so that as sets, $\sum_{i=0}^m RX^i = \sum_{i=0}^m X^iR$. Hence, if $\sum_{i=0}^m r_i\op X^i=\sum_{j=0}^{m'}r'_j\op X^j$ for some $r_i,r'_j\in R^\text{op}$ and $m,m'\in\mathbb{N}$, then $m=m'$ and so 
\allowdisplaybreaks
\begin{align}
0=&\sum_{i=0}^m(r_i-r_i')\op X^i=\sum_{i=0}^mX^i\cdot (r_i-r_i')=\sum_{i=0}^m\sum_{j\in\mathbb{N}}\pi_j^i(r_i-r_i')X^j\nonumber\\
=&\sum_{j=0}^m\sum_{i=0}^m\pi_j^i(r_i-r_i')X^j\implies0=\sum_{i=0}^m\pi_j^i(r_i-r_i')X^j\quad\text{for } 0\leq j\leq m,\label{eq:r_i-sum}
\end{align}
where the implication comes from comparing coefficients with the left-hand side, which is equal to zero. Let us prove by induction that $r_j=r'_j$ for $0\leq j\leq m$. Put $k=m-j$, where $m$ is fixed, and consider the statement $\mathsf{P}(k)\colon$ $r_{m-k}=r'_{m-k}$ for $0\leq k\leq m$. 

Base case $(\mathsf{P}(0))\colon k=0\iff j=m$, so using that $\sigma$ is an automorphism,
\begin{equation*}
0\stackrel{\eqref{eq:r_i-sum}}{=}\sum_{i=0}^m\pi_m^i(r_i-r_i')X^m=\sigma^m(r_m-r_m')X^m\implies 0=r_m-r_m'.
\end{equation*}
Induction step (For $0\leq k\leq m\colon (\mathsf{P}(k)\to \mathsf{P}(k+1)$)): By putting $j=m-(k+1)$ and then using the induction hypothesis,
\begin{equation*}
0\stackrel{\eqref{eq:r_i-sum}}{=}\sum_{i=0}^m\pi_{m-(k+1)}^i(r_i-r_i')X^{m-(k+1)}=\sigma^{m-(k+1)}(r_{m-(k+1)}-r'_{m-(k+1)}),
\end{equation*}
which implies $0=r_{m-(k+1)}=r'_{m-(k+1)}$. Hence $r_j=r_j'$ for $0\leq j\leq m$, so $\sum_{i=0}^mr_i\op X^i=\sum_{j=0}^{m'}r_j'\op X^j\implies\sum_{i=0}^m r_iX^i=\sum_{j=0}^{m'}r_j'X^j$, proving that $f$ is injective. Additivity of $f$ follows immediately from the definition by using distributivity. Using additivity also makes it sufficient to consider only two arbitrary monomials $aX^m$ and $bX^n$ in $S$ when proving that $f$ is multiplicative. To this end, let us use the following notation for multiplication in $S$: $aX^m\bullet bX^n:=\sum_{i\in\mathbb{N}}\left(a\op\bar\pi_i^m(b)\right)X^{i+n}$, and then use induction over $n$ and $m$;

Base case $(\mathsf{P}(0,0))\colon f(a\bullet b)=f(a\op b)=a\op b=f(a)\op f(b)$. 

Induction step over $n$ ($\forall (m,n)\in\mathbb{N}\times\mathbb{N}\ (\mathsf{P}(m,n)\to\mathsf{P}(m,n+1))$): We know that $X\in N(S')$ by \autoref{prop:X-nucleus}, and so 
\begin{align*}
&f\left(aX^m\bullet bX^{n+1}\right)=f\left(\sum_{i\in\mathbb{N}}\left(a\op \bar\pi_i^m(b)\right)X^{i+n+1}\right)=\sum_{i\in\mathbb{N}} \left(a\op \bar\pi_i^m(b)\right)\op X^{i+n+1}\\
&=\left(\sum_{i\in\mathbb{N}} \left(a\op \bar\pi_i^m(b)\right)\op X^{i+n}\right)\op X=f(aX^m\bullet bX^n)\op X\\
&=\left(f(aX^m)\op f(bX^n)\right)\op X=f(aX^m)\op \left(f(bX^n)\op X\right)\\
&=f(aX^m)\op\left((b\op X^n)\op X\right)=f(aX^m)\op\left(b\op (X^n\op X)\right)\\
&=f(aX^m)\op\left(b\op X^{n+1}\right)=f(aX^m)\op f(bX^{n+1}).
\end{align*}

Induction step over $m$ ($\forall (m,n)\in\mathbb{N}\times\mathbb{N}\ (\mathsf{P}(m,n)\to\mathsf{P}(m+1,n))$): We know that $X\in N(S'^{\text{op}})\cap N(S)$ by \autoref{prop:X-nucleus}, and so by a straightforward calculation, $f\left(aX^{m+1}\bullet bX^n\right)=f\left(aX^{m+1}\right)\op f\left(bX^n\right)$.
%\allowdisplaybreaks
%\begin{align*}
%&f\left(aX^{m+1}\bullet bX^n\right)=f\left(\left(aX^m\bullet X\right)\bullet bX^n\right)=f\left(aX^m\bullet\left(X\bullet bX^n\right)\right)\\
%&=f\left(aX^m\bullet\left(\left(\sigma^{-1}(b)X-\delta\circ\sigma^{-1}(b)\right)\bullet X^n\right)\right)\\
%&=f\left(aX^m\bullet\sigma^{-1}(b)X^{n+1}\right)-f\left(aX^m\bullet \delta\circ\sigma^{-1}(b) X^n\right)\\
%&=f\left(aX^m\bullet\sigma^{-1}(b)X^n\right)\op X -f\left(aX^m\bullet\delta\circ\sigma^{-1}(b)X^n\right)\\
%&=\left(f\left(aX^m\right)\op f\left(\sigma^{-1}(b)X^n\right)\right)\op X - f\left(aX^m\right)\op f\left(\delta\circ\sigma^{-1}(b)X^n\right)\\
%&=f\left(aX^m\right)\op \left(f\left(\sigma^{-1}(b)X^n\right)\op X\right) - f\left(aX^m\right)\op f\left(\delta\circ\sigma^{-1}(b)X^n\right)\\
%&=f\left(aX^m\right)\op f\left(\sigma^{-1}(b)X^{n+1}\right) - f\left(aX^m\right)\op f\left(\delta\circ\sigma^{-1}(b)X^n\right)\\
%&=f\left(aX^m\right)\op f\left(\sigma^{-1}(b)X^{n+1}-\delta\circ\sigma^{-1}(b)X^n\right)\\
%&=f\left(aX^m\right)\op f\left(\left(\sigma^{-1}(b)X-\delta\circ\sigma^{-1}(b)\right)\bullet X^n\right)\\
%&=f\left(aX^m\right)\op f\left(\left(X\bullet b\right)\bullet X^n\right)=f\left(aX^m\right)\op f\left(X\bullet \left(b\bullet X^n\right)\right)\\
%&=f\left(aX^m\right)\op f\left(X\bullet bX^n\right)=f\left(aX^m\right)\op \left(f\left(X\right)\op f\left(bX^n\right)\right)\\
%&=f\left(aX^m\right)\op \left(X\op f\left(bX^n\right)\right)=f\left(\left(aX^m\right)\op X\right)\op f\left(bX^n\right)\\
%&=f\left(aX^{m+1}\right)\op f\left(bX^n\right),
%\end{align*}
Now, according to \autoref{def:morphism} with $R[X;\sigma,\delta]$ considered as a hom-associative algebra over the integers, we are done if we can prove that $f\circ\alpha=\alpha\circ f$ for the homogeneously extended map $\alpha$. Since both $\alpha$ and $f$ are additive, it again suffices to prove that $f\left(\left(\alpha(aX^m\right)\right)=\alpha\left(f\left(aX^m\right)\right)$ for some arbitrary monomial $aX^m$ in $R[X;\sigma,\delta]$. This is verified by a simple computation. 
%\allowdisplaybreaks
%\begin{align*}
%&f\left(\alpha\left(aX^m\right)\right)=f\left(\alpha(a)X^m\right)=\alpha(a)\op X^m=X^m\cdot\alpha(a)=\sum_{i\in\mathbb{N}}\pi_i^m(\alpha(a))X^{i}\\
%&=\sum_{i\in\mathbb{N}}\alpha\left(\pi_i^m(a)\right)X^i=\alpha\left(\sum_{i\in\mathbb{N}}\pi_i^m(a)X^i\right)=\alpha\left(X^m\cdot a\right)=\alpha\left(f\left(aX^m\right)\right).\qedhere
%\end{align*}
\end{proof}

\begin{theorem}[Hilbert's basis theorem for hom-associative rings]\label{thm:hom-hilbert} Let $R$ be a unital, hom-associative ring with twisting map $\alpha$, $\sigma$ an automorphism and $\delta$ a $\sigma$-derivation that both commute with $\alpha$. Extend $\alpha$ homogeneously to $R[X;\sigma,\delta]$.  If $R$ is right (left) Noetherian, then so is $R[X;\sigma,\delta]$.
\end{theorem}

\begin{proof}This proof is an adaptation of a proof in \cite{GW04} to the hom-associative setting. Let us begin with the right case, and therefore assume that $R$ is right Noetherian. We wish to show that any right ideal of $R[X;\sigma,\delta]$ is finitely generated. Since the zero ideal is finitely generated, it is sufficient to show that any nonzero right ideal $I$ of $R[X;\sigma,\delta]$ is finitely generated. Let $J:=\{r\in R\colon rX^d+ r_{d-1}X^{d-1}+\dots + r_1X+r_0\in I, r_{d-1},\dots,r_0\in R\}$, i.e. $J$ consists of the zero element and all leading coefficients of polynomials in $I$. We claim that $J$ is a right ideal of $R$. First, one readily verifies that $J$ is an additive subgroup of $R$. Now, let $r\in J$ and $a\in R$ be arbitrary. Then there is some polynomial $p=rX^d+[\text{lower order terms}]$ in $I$. Moreover, $p\cdot\sigma^{-d}(a)=rX^d\cdot \sigma^{-d}(a)+[\text{lower order terms}]=\left(r\cdot\sigma^d(\sigma^{-d}(a))\right)X^d+[\text{lower order terms}]=(r\cdot a)X^d+[\text{lower order terms}]$, which is an element of $I$ since $p$ is. Therefore, $r\cdot a\in J$, so $J$ is a right ideal of $R$. 

Since $R$ is right Noetherian and $J$ is a right ideal of $R$, $J$ is finitely generated, say by $\{r_1,\dots,r_k\}\subseteq J$. All the elements $r_1,\dots,r_k$ are assumed to be nonzero, and moreover, each of them is a leading coefficient of some polynomial $p_i\in I$ of degree $n_i$. Put $n=\max(n_1,\dots,n_k)$. Then each $r_i$ is the leading coefficient of $p_i \cdot X^{n-n_i}=r_i X^{n_i}\cdot X^{n-n_i}+[\text{lower order terms}]=r_iX^n+[\text{lower order terms}]$, which is an element of $I$ of degree $n$.

Let $N:=\sum_{i=0}^{n-1} RX^i$. Then calculations similar to those in the proof of the third statement of \autoref{lem:opposite-hom-noetherian} show that as sets, $N=\sum_{i=0}^{n-1} RX^i = \sum_{i=0}^{n-1} X^iR$. By \autoref{prop:ore-module}, $N$ is then a hom-Noetherian right $R$-hom-module. Now, since $I$ is a right ideal of the ring $R[X;\sigma,\delta]$ which contains $R$, in particular, it is also a right $R$-hom-module. By \autoref{prop:hom-submodule-intersection}, $I\cap N$ is then a hom-submodule of $N$, and since $N$ is a hom-Noetherian right $R$-hom-module, $I\cap N$ is finitely generated, say by the set $\{q_1,q_2,\dots, q_t\}$.

Let $I_0$ be the right ideal of $R[X;\sigma,\delta]$ generated by 
\begin{equation*}
\left\{p_1\cdot X^{n-n_1},p_2\cdot X^{n-n_2},\dots,p_k\cdot X^{n-n_k},q_1,q_2,\dots,q_t\right\}.
\end{equation*}
Since all the elements in this set belong to $I$, we have that $I_0\subseteq I$. We claim that $I\subseteq I_0$. In order to prove this, pick any element $p'\in I$.

Base case ($\mathsf{P}(n)$): If $\deg p'<n$, $p'\in N=\sum_{i=0}^{n-1} RX^i$, so $p'\in I\cap N$. On the other hand, the generating set of $I\cap N$ is a subset of the generating set of $I_0$, so $I\cap N\subseteq I_0$, and therefore $p'\in I_0$. 

Induction step ($\forall m\geq n\ (\mathsf{P}(m)\rightarrow \mathsf{P}(m+1))$): Assume $\deg p'= m\geq n$ and that $I_0$ contains all elements of $I$ with $\deg < m$. Does $I_0$ contain all elements of $I$ with $\deg<m+1$ as well? Let $r'$ be the leading coefficient of $p'$, so that we have $p'=r'X^m+[\text{lower order terms}]$. Since $p'\in I$ by assumption, $r'\in J$. We then claim that $r'=\sum_{i=1}^k\sum_{j=1}^{k'}(\cdots((r_i\cdot a_{ij1})\cdot a_{ij2})\cdot\cdots)\cdot a_{ijk''}$ for some $k', k''\in\mathbb{N}_{>0}$ and some $a_{ij1}, a_{ij2},\ldots, a_{ijk''}\in R$. First, we note that since $J$ is generated by $\{r_1,r_2,\dots,r_k\}$, it is necessary that $J$ contains all elements of that form. Secondly, we see that subtracting any two such elements or multiplying any such element from the right with one from $R$ again yields such an element, and hence the set of all elements of this form is not only a right ideal containing $\{r_1,r_2,\dots,r_k\}$, but also the smallest such to do so. 

Recalling that $p_i\cdot X^{n-n_i}=r_iX^n+[\text{lower order terms}]$, $\left(p_i\cdot X^{n-n_i}\right)\cdot \sigma^{-n}(a_{ij1})=(r_i\cdot a_{ij1})X^n+[\text{lower order terms}]$, and by iterating this multiplication from the right, we set $c_{ij}:=\left(\cdots\left(\left(\left(p_i\cdot X^{n-n_i}\right)\cdot\sigma^{-n}(a_{ij1})\right)\cdot\sigma^{-n}(a_{ij2})\right)\cdot\cdots\right)\cdot\sigma^{-n}(a_{ijk''})$. Since $p_i\cdot X^{n-n_i}$ is a generator of $I_0$, $c_{ij}$ is an element of $I_0$ as well, and therefore also $q:=\sum_{i=1}^k\sum_{j=1}^{k'} c_{ij}\cdot X^{m-n}=r'X^m + [\text{lower order terms}].$ However, as $I_0\subseteq I$, we also have that $q\in I$, and since $p'\in I$, $(p'-q)\in I$. Now, $p'=r'X^m+[\text{lower order terms}]$, so $\deg(p'-q)<m$, and therefore $(p'-q)\in I_0$. This shows that $p'=(p'-q)+q$ is an element of $I_0$ as well, and thus $I=I_0$, which is finitely generated.

For the left case, first note that any hom-associative ring $S$ is right (left) Noetherian if and only if $S^\text{op}$ is left (right) Noetherian, due to the fact that any right (left) ideal of $S$ is a left (right) ideal of $S^\text{op}$, and vice versa. Now, assume that $R$ is left Noetherian. Then, $R^\text{op}$ is right Noetherian, and using \ref{le:Rop-automorphism} and \ref{le:Rop-derivation} in \autoref{lem:opposite-hom-noetherian}, $\sigma^{-1}$ is an automorphism and $-\delta\circ\sigma^{-1}$ a $\sigma^{-1}$-derivation on $R^\text{op}$ that both commute with $\alpha$. Hence, by the previously proved right case, $R^{\text{op}}[X;\sigma^{-1},-\delta\circ\sigma^{-1}]$ is right Noetherian. By \ref{le:Rop-equality} in \autoref{lem:opposite-hom-noetherian}, $R^{\text{op}}[X;\sigma^{-1},-\delta\circ\sigma^{-1}]\cong R[X;\sigma,\delta]^\text{op}$. One verifies that surjective morphisms between hom-associative rings preserve the Noetherian conditions \ref{eq:hom-noetherian-ring-1}, \ref{eq:hom-noetherian-ring-2}, and \ref{eq:hom-noetherian-ring-3} in \autoref{prop:hom-noetherian-conditions-for-rings} by examining the proof of \autoref{prop:surjective-hom-noetherian-morphism}, changing the module morphism to that between rings instead, and ``submodule'' to ``ideal''. Therefore, $R[X;\sigma,\delta]^\text{op}$ is right Noetherian, so $R[X;\sigma,\delta]$ is left Noetherian.
\end{proof}

\begin{remark}\label{re:assoc-hilbert} By putting $\alpha=\mathrm{id}_R$ in \autoref{thm:hom-hilbert}, we recover the classical Hilbert's basis theorem for Ore extensions. 
\end{remark}

\begin{corollary}[Hilbert's basis theorem for non-associative rings]\label{cor:non-hilbert}
Let $R$ be a unital, non-associative ring, $\sigma$ an automorphism and $\delta$ a $\sigma$-derivation on $R$. If $R$ is right (left) Noetherian, then so is $R[X;\sigma,\delta]$.
\end{corollary}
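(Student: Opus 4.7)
The plan is to deduce this as an immediate specialization of \autoref{thm:hom-hilbert} by taking the twisting map $\alpha$ on $R$ to be the zero map; this mirrors how \autoref{re:assoc-hilbert} specializes the same theorem with $\alpha=\mathrm{id}_R$. Since a non-associative ring comes with no given twisting map, the first step is simply to \emph{equip} $R$ with $\alpha:=0$ and observe that this makes $R$ a unital, hom-associative ring: the identity $\alpha(a)\cdot(b\cdot c)=(a\cdot b)\cdot\alpha(c)$ collapses to $0=0$ regardless of how $R$ multiplies.

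Next I would verify the remaining hypotheses of \autoref{thm:hom-hilbert} in this setting. The map $\sigma$ is an automorphism and $\delta$ a $\sigma$-derivation by assumption, and $\alpha=0$ trivially commutes with any additive map, so $\alpha\circ\sigma=0=\sigma\circ\alpha$ and $\alpha\circ\delta=0=\delta\circ\alpha$. Extending $\alpha$ homogeneously to $R[X;\sigma,\delta]$ via $aX^m\mapsto\alpha(a)X^m=0$ produces the zero map on the Ore extension as well, so the extended twisting map is again zero.

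The bridge between ``noetherian'' and ``hom-noetherian'' is supplied by \autoref{re:noetherian-ring} (which in turn uses \autoref{re:ideal}): when the twisting map is the zero map, a right (left) hom-ideal is just a right (left) ideal, and hence the hom-noetherian condition coincides with the ordinary noetherian condition. Thus the assumption that $R$ is right (left) noetherian is exactly the assumption that $R$, with $\alpha=0$, is right (left) hom-noetherian.

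Applying \autoref{thm:hom-hilbert} now yields that $R[X;\sigma,\delta]$ is right (left) hom-noetherian, and since its (extended) twisting map is also the zero map, one more invocation of \autoref{re:noetherian-ring} converts this back to the statement that $R[X;\sigma,\delta]$ is right (left) noetherian in the ordinary sense. There is no real obstacle to overcome here: the entire argument is a bookkeeping reduction, and the only thing to be careful about is making explicit that the zero map satisfies every hypothesis of \autoref{thm:hom-hilbert} and that \autoref{re:noetherian-ring} is used twice, once on $R$ and once on $R[X;\sigma,\delta]$.
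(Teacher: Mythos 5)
Your proposal is correct and is exactly the paper's argument: the paper's entire proof reads ``Put $\alpha\equiv0$ in \autoref{thm:hom-hilbert}'', and you have simply spelled out the routine verifications (the zero map makes any non-associative ring hom-associative, commutes with $\sigma$ and $\delta$, and via \autoref{re:noetherian-ring} identifies hom-noetherian with noetherian on both $R$ and $R[X;\sigma,\delta]$) that the paper leaves implicit.
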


\begin{proof} Put $\alpha\equiv0$ in \autoref{thm:hom-hilbert}.
\end{proof}

\section{Examples}\label{sec:examples}
Here we provide some examples of unital, non-associative and hom-associative Ore extensions which are all Noetherian by the above theorem. First, recall that there are, up to isomorphism, only four normed, unital division algebras over the real numbers: the real numbers themselves, the complex numbers, the quaternions ($\mathbb{H}$), and the octonions ($\mathbb{O}$)~\cite{UW60}. The largest of the four are the octonions, and while sharing the property of not being commutative with the quaternions, the octonions are the only ones that are not associative. All of the four algebras above are Noetherian, and hence also all iterated Ore extensions of them: let $D$ be any unital division algebra, and $I$ any nonzero right ideal of $D$. If $a\in D$ is an arbitrary nonzero element, then  $1=a\cdot a^{-1}\in I$, so $I=D$, and analogously for the left case. As an ideal of itself, $D$ is finitely generated (by $1$, for instance), as is the zero ideal. 

The derivations on any normed division algebra $D$ is a linear combination of derivations $\delta_{a,b}$ where $a,b\in D$, defined by $\delta_{a,b}(x):=[[a,b],x]-3(a,b,x)$ for all $x\in D$~\cite{Sch66}. These derivations are called \emph{inner}, and in particular, all derivations on $\mathbb{H}$ are of the form $[a,\cdot]$ for some $a\in\mathbb{H}$.

Given a unital and associative algebra $A$ with product $\cdot$ over a field of characteristic different from two, one may define a unital and non-associative algebra $A^+$ by using the \emph{Jordan product} $\{\cdot,\cdot\}\colon A^+\to A^+$. This is given by $\{a,b\}:=\frac{1}{2}\left(a\cdot b + b\cdot a\right)$ for any $a,b\in A$. $A^+$ is then a \emph{Jordan algebra}, i.e. a commutative algebra where any two elements $a$ and $b$ satisfy the \emph{Jordan identity}, $\left\{\{a,b\}\{a,a\}\right\}=\left\{a,\{b,\{a,a\}\}\right\}$. Since inverses on $A$ extend to inverses on $A^+$, one may infer that if $A=\mathbb{H}$, then $A^+$ is also Noetherian. Using the standard notation $i,j,k$ for the \emph{quaternion units} in $\mathbb{H}$ with defining relation $i^2=j^2=k^2=ijk=-1$, one can see that $\mathbb{H}^+$ is not associative as e.g. $(i,i,j)_{\mathbb{H}^+}:=\{\{i,i\},j\}-\{i,\{i,j\}\}=-j$.

\begin{example}Let $\sigma$ be the automorphism on $\mathbb{H}$ defined by $\sigma(i)=-i$, $\sigma(j)=k$, and $\sigma(k)=j$. Any automorphism on $\mathbb{H}$ is also an automorphism on $\mathbb{H}^+$, and hence $\mathbb{H}^+\left[X;\sigma,0_\mathbb{H}\right]$ is a unital, non-associative Ore extension. $\mathbb{H}^+\left[X;\sigma,0_\mathbb{H}\right]$ is then Noetherian by \autoref{cor:non-hilbert}.
\end{example}

\begin{example}\label{ex:inner-der-plus}Let $[j,\cdot]_\mathbb{H}$ be the inner derivation on $\mathbb{H}$ induced by $j$. Any derivation on $\mathbb{H}$ is also a derivation on $\mathbb{H}^+$, and so we may form the unital, non-associative Ore extension $\mathbb{H}^+\left[X;\mathrm{id}_\mathbb{H},[j,\cdot]_\mathbb{H}\right]$ which is Noetherian due to \autoref{cor:non-hilbert}. 
\end{example}

\begin{example}\label{ex:inner-der}From the Jordan identity one may infer that a map $\delta_{a,b}\colon J\to J$ defined by $\delta_{a,b}(x):=(a,x,b)_{J}$ for any $a,b,x\in J$ where $J$ is a Jordan algebra, is a derivation, called an inner derivation. On $\mathbb{H}^+$ one could for instance take $a=i$ and $b=j$, resulting in $\delta_{i,j}(x)=\{\{i,x\},j\}-\{i,\{x,j\}\}$ for any $x\in\mathbb{H}^+$. Then $\mathbb{H}^+\left[X;\mathrm{id}_{\mathbb{H}},\delta_{i,j}\right]$ is a unital, non-associative Ore extension which is Noetherian by \autoref{cor:non-hilbert}. 
\end{example}

\begin{example}
Take any derivation on $\mathbb{O}$, e.g. $\delta_{i,j}$ defined by $\delta_{i,j}(x):=[[i,j],x]-3(i,j,x)$ for any $x\in\mathbb{O}$. Then $\mathbb{O}[X;\mathrm{id}_{\mathbb{O}},\delta_{i,j}]$ is a unital, non-associative Ore extension which is Noetherian by \autoref{cor:non-hilbert}.
\end{example}

\begin{example}\label{ex:weyl-octonions}One may define an \emph{octonionic Weyl algebra} $A(\mathbb{O})$ as the tensor product of the usual Weyl algebra $A(\mathbb{R})$ over $\mathbb{R}$, and $\mathbb{O}$. $A(\mathbb{O})$ is then a free module of finite rank over $A(\mathbb{R})$, and hence it is Noetherian. One may also define an octonionic Weyl algebra as an iterated Ore extension of $\mathbb{O}$. Using this latter approach, let us first mention that for any unital, non-associative ring $R$, the \emph{non-associative Weyl algebra over $R$} was introduced in \cite{NOR15} as the iterated, unital, non-associative Ore extension $R[Y][X;\mathrm{id}_R,\delta]$ where $\delta\colon R[Y]\to R[Y]$ is an $R$-linear map such that $\delta(1)=0$. The unital, non-associative Ore extension of $\mathbb{O}$ in the indeterminate $Y$ is the unital and non-associative polynomial ring $\mathbb{O}[Y;\mathrm{id}_\mathbb{O},0_{\mathbb{O}}]$, for which we write $\mathbb{O}[Y]$. Let $\delta\colon \mathbb{O}[Y]\to \mathbb{O}[Y]$ be the $\mathbb{O}$-linear map defined on monomials by $\delta\left(aY^m\right)=maY^{m-1}$ for arbitrary $a\in\mathbb{O}$ and $m\in\mathbb{N}$, with the interpretation that $0aY^{-1}$ is $0$. One readily verifies that $\delta$ is an $\mathbb{O}$-linear derivation on $\mathbb{O}[Y]$, and by \autoref{re:sigma-derivation}, $\delta(1)=0$. We define an octonionic Weyl algebra $\mathbb{O}[Y][X;\mathrm{id}_{\mathbb{O}[Y]},\delta]$, where $\delta$ is the aforementioned derivation. By using \autoref{cor:non-hilbert} twice, $\mathbb{O}[Y][X;\mathrm{id}_{\mathbb{O}[Y]},\delta]$ is Noetherian. Moreover, $\mathbb{O}[Y][X;\mathrm{id}_{\mathbb{O}[Y]},\delta]\cong A(\mathbb{O})$.
\end{example}

\begin{comment}
\begin{example}\label{ex:q-weyl-octonions}First, for any non-associative ring $R$, the \emph{center} of $R$ is defined as $Z(R):=N(R)\cap C(R)$ where $N(R)$ is the nucleus of $R$ (cf. \autoref{sec:hilbert-theorem}), and $C(R):=\{r\in R\colon r\cdot s = s\cdot r\}$, the \emph{commuter} of $R$. Using the fact that $Z(\mathbb{O})=\mathbb{R}$, we may define an $\mathbb{O}$-automorphism on $\mathbb{O}[Y]$ by $\sigma(Y)=qY$ for some $q\in\mathbb{R}\backslash\{0,1\}$. By \autoref{prop:X-nucleus}, $Y^k\in N(\mathbb{O}[Y])$ for any $k\in\mathbb{N}$, and so $Y^k\in Z(\mathbb{O}[Y])$. From this we may infer that for each polynomial $p(Y)\in \mathbb{O}[Y]$, there is a unique polynomial $r(Y)\in\mathbb{O}[Y]$ such that $p(qY)-p(Y)=(qY-Y)r(Y)=r(Y)(qY-Y)$. It thus makes sense to define a map $\delta$ on $\mathbb{O}[Y]$ by
\begin{equation*}
\delta(p(Y)):=\frac{p(qY)-p(Y)}{qY-Y}=\frac{\sigma(p(Y))-p(Y)}{\sigma(Y)-Y},
\end{equation*}
the \emph{Jackson $q$-derivative}. By a straightforward calculation, $\delta$ is an $\mathbb{O}$-linear $\sigma$-derivation. We define the \emph{$q$-Weyl algebra over the octonions}, or the \emph{octonionic $q$-Weyl algebra} as $\mathbb{O}[Y][X;\sigma,\delta]$ where $\sigma$ and $\delta$ are the aforementioned maps. By using \autoref{cor:non-hilbert} twice, the octonionic $q$-Weyl algebra is Noetherian. 
\end{example}
\end{comment}

\begin{example}\label{ex:q-weyl-octonions}
For any $q\in\mathbb{R}\backslash\{0,1\}$, one may define an \emph{octonionic $q$-Weyl algebra} $A_q(\mathbb{O})$ as the tensor product of the usual $q$-Weyl algebra $A_q(\mathbb{R})$ over $\mathbb{R}$, and $\mathbb{O}$. In particular, $A_q(\mathbb{O})$ is a free module of finite rank over $A_q(\mathbb{R})$, and hence it is Noetherian. Alternatively, one may see that $A_q(\mathbb{O})$ is Noetherian by noting that it is the iterated Ore extension $\mathbb{O}[Y][X;\sigma,\delta]$ where $\sigma$ is the automorphism on $\mathbb{O}[Y]$ defined by $\sigma(Y)=qY$, and $\delta$ the \emph{Jackson $q$-derivative}, i.e. the $\sigma$-derivation defined on an arbitrary polynomial $p(Y)\in\mathbb{O}[Y]$ by 
\begin{equation*}
\delta(p(Y)):=\frac{p(qY)-p(Y)}{qY-Y}=\frac{\sigma(p(Y))-p(Y)}{\sigma(Y)-Y}.
\end{equation*}
\begin{comment}
Let $A_q(\mathbb{R})$ be the usual $q$-Weyl algebra over $\mathbb{R}$. As a vector space over $\mathbb{R}$, it has a basis $\{y^mx^n\colon m,n\in\mathbb{N}\}$. $\mathbb{O}$ is also an $\mathbb{R}$-vector space, with basis $B=\{1,i,j,k,l,li,lj,lk\}$. Hence the $\mathbb{R}$-algebra $\mathbb{O}\otimes_\mathbb{R}A_q(\mathbb{R})$ has a basis, as an $\mathbb{R}$-vector space, equal to $\{b\otimes y^mx^n\colon b\in B, m,n\in\mathbb{N}\}$. We want to define an $\mathbb{R}$-algebra isomorphism $f\colon \mathbb{O}\otimes_\mathbb{R} A_q(\mathbb{R})\to A_q(\mathbb{O}):=\mathbb{O}[Y][X;\sigma,\delta]$. We define $f$ on an arbitrary basis element as $f(b\otimes y^mx^n)=bY^mX^n$. This is bijective, and moreover, using that all powers of $X$ and $Y$ associate with all other elements, $f((b\otimes y^mx^n)\cdot(b'\otimes y^m'x^n'))=f(bb'\otimes y^mx^ny^{m'}x^{n'})=\{\text{rewrite using basis elements, back and forth}\}=(bb')Y^mX^nY^{m'}X^{n'}=bY^mX^nb'Y^{m'}X^{n'}=f(b\otimes y^mx^n)\cdot f(b'\otimes y^{m'}x^{n'})$. Hence $f$ is an $\mathbb{R}$-automorphism.

We turn $\mathbb{O}\otimes_{\mathbb{R}}A_q(\mathbb{R})$ into an $A_q(\mathbb{R})$-module by defining $a\cdot (b\otimes_{\mathbb{R}} y^mx^n)=b\otimes_\mathbb{R}ay^mx^n$ and $(b\otimes_{\mathbb{R}} y^mx^n)\cdot a=b\otimes_\mathbb{R}y^mx^na$ on arbitrary basis elements and then extending to all elements. This is now a free $A_q(\mathbb{R})$-module of rank 8 with basis $\{1\otimes_\mathbb{R} 1,i\otimes_\mathbb{R} 1,j\otimes_\mathbb{R} 1,\ldots,lk\otimes_\mathbb{R} 1\}$, and hence it is Noetherian.
\end{comment}  	
\end{example}

\begin{example}\label{ex:unital}
This example is a slight generalization of Example 1.1 in \cite{FG09}. Let $R$ and $S$ be associative, commutative, and unital rings, and $f\colon R\to S$ a homomorphism. Further assume that $R$ is Noetherian. Let $A$ be a non-associative, non-unital, Noetherian $S$-algebra, and define a multiplication $\cdot$ on $U:=R\times A$ by $(r_1,a_1)\cdot (r_2,a_2):=(r_1r_2,f(r_1)a_2+f(r_2)a_1+a_1a_2)$ for any $r_1,r_2\in R$ and $a_1,a_2\in A$. $U$ is then unital with identity element $(1,0)$, and by defining a twisting map $\alpha$ on $U$ by $\alpha(r,a):=(pr,0)$ for any $r\in R$, $a\in A$, and $p\in\ker f$, $U$ is hom-associative. Moreover, $U$ is Noetherian, and if $A$ is not associative, then $U$ is not associative. Now, let $\sigma_A$ be an automorphism on $A$. Then $\sigma$ defined by $\sigma(r,a):=(r,\sigma_A(a))$ is an automorphism on $U$. Moreover, if $\delta_A$ is a $\sigma_A$-derivation on $A$, then $\delta$ defined by $\delta(r,a):=(0,\delta_A(a))$ is a $\sigma$-derivation on $U$, and both $\delta$ and $\sigma$ commute with $\alpha$. Hence, by \autoref{thm:hom-hilbert}, $U[X;\sigma,\delta]$ is Noetherian. Here, one could e.g. take  $R=\mathbb{R}[Y]$, $S=\mathbb{R}$, $f\colon\mathbb{R}[Y]\to\mathbb{R}$ the evaluation homomorphism at zero, $p\in\mathbb{R}[Y]$ any polynomial without a constant term, and $A$, $\sigma_A$, and $\delta_A$ any $\mathbb{R}$-algebra, $\sigma$, and $\delta$, respectively, from the previous examples.  

We here include a proof that $U$ is Noetherian. Suppose we have an ascending chain of right (left) ideals, $I_1 \subseteq I_2 \subseteq \ldots$, in $U$. Define $J_j = \{r \in R \, | \,  \exists a \in A: (r,a) \in I_j\}.$ This is an ideal in $R$. Also define $H_j = \{a \in A \, | \,  (0,a) \in I_j\}.$ This is a right (left) ideal in $A$. We thus have two ascending chains,  $J_1\subseteq J_2\subseteq \ldots$ and $H_1\subseteq H_2\subseteq \ldots$, in $R$ and $A$, respectively. Since $R$ and $A$ are Noetherian there is some integer $n$ such that if $k>n$ then $J_k= J_n$ and $H_k = H_n$. We claim that in fact also $I_k = I_n $. Let $(r,a) \in I_k$. Then $r \in J_k = J_n$ so there is $a' \in A$ such that $(r,a') \in I_n$. It follows that $a-a' \in H_k = H_n$, which implies $(0,a-a') \in I_n$. Hence $(r,a) = (r,a')+(0,a-a')$ is a sum of two elements in $I_n$ and therefore belongs to $I_n$.
\end{example}

\begin{example}\label{ex:lars}
Let $R$ be a unital, non-associative, Noetherian ring, and denote by $I$ the ideal of $R$ generated by all expressions of the form $r(st)-(rs)t$ where $r,s,t\in R$. Define $S:=R/I$ and let $\pi\colon R\to S$ be the natural homomorphism. Set $U:=R\times S$ and define a multiplication $\cdot$ on $U$ by $(r_1,s_1)\cdot (r_2,s_2):=(r_1r_2,\pi(r_1)s_2+s_1\pi(r_2)+s_1s_2)$ for all $r_1,r_2\in R$ and $s_1,s_2\in R$. $U$ is unital with identity element $(1,0)$, and the map $\alpha$ defined by $\alpha(r,s):=(0,\pi(r)+s)$ for all $r\in R$ and $s\in S$ is a well-defined twisting map that makes $U$ hom-associative. Since $R$ is Noetherian, so is $S$, and by the same argument as in \autoref{ex:unital}, $U$ is Noetherian. Moreover, if $R$ is not associative, then $U$ is not associative. Now, let $\sigma_R$ be an endomorphism on $R$. Then $\sigma_R(I)\subseteq I$, which guarantees  the naturally extended endomorphism $\sigma_S$ on $S$ to be well-defined. By defining $\sigma(r,s):=(\sigma_R(r),\sigma_S(s))$, we get an endomorphism $\sigma$ on $U$. Similarly, any $\sigma_R$-derivation $\delta_R$ satisfies $\delta_R(I)\subseteq I$, and hence the naturally extended $\sigma_S$-derivation $\delta_S$ is well-defined, and in turn gives rise to a $\sigma$-derivation $\delta$ on $U$ defined by $\delta(r,s):=(\delta_R(r),\delta_S(s))$. Now, assume that $\sigma_R$ is an automorphism. Then it is clear that $\sigma_S$ is surjective. Moreover, $\sigma_R^{-1}(I)\subseteq I$, which in turn implies that $\sigma_S$ is injective. Hence $\sigma$ is an automorphism. Moreover, $\alpha$ commutes with both $\delta$ and $\sigma$, and therefore $U[X;\sigma,\delta]$ is Noetherian. Here, one could e.g. take $R$ to be any base ring together with $\sigma$ and $\delta$ from the previous examples. 
\end{example}

\section{Acknowledgments}
We wish to thank Patrik Lundstr{\"o}m for pointing out that the associator identity could be used in proving \autoref{prop:X-nucleus} in a simpler way than was originally done. We would also like to thank Lars Hellstr\"{o}m for providing the unital hom-ring in \autoref{ex:lars}, and the referee for giving concrete suggestions on how to improve the article.

\end{document}